\newcommand{\be}{\begin{equation}}
\newcommand{\en}{\end{equation}}
\newcommand{\bea}{\begin{eqnarray}}
\newcommand{\ena}{\end{eqnarray}}
\newcommand{\beano}{\begin{eqnarray*}}
\newcommand{\enano}{\end{eqnarray*}}
\newcommand{\bee}{\begin{enumerate}}
\newcommand{\ene}{\end{enumerate}}
\newcommand{\ad}{^{\mbox{\scriptsize $\dag$}}}
\newcommand{\mc}{\mathcal}
\newcommand{\mb}{\mathbb}
\newcommand{\Id}{1\!\!1}
\newcommand{\cF}{{\mathcal F}}
\newcommand{\B}{{\mc B}}
\newcommand{\Lc}{{\mc L}}
\newcommand{\D}{{\mc D}}
\newcommand{\E}{{\mc E}}
\newcommand{\cM}{{\mathcal M}}
\newcommand{\up}{\upharpoonright}
\newtheorem{defn}{Definition}[section]
\newtheorem{thm}[defn]{Theorem}
\newtheorem{prop}[defn]{Proposition}
\newtheorem{example}[defn]{Example}
\newtheorem{rem}[defn]{Remark}
\def\x{\relax\ifmmode {\mbox{*}}\else*\fi}
\newcommand{\beex}{\begin{example}$\!\!${\bf }$\;$\rm }
\newcommand{\enex}{ \end{example}}
\newcommand{\berem}{\begin{rem}$\!\!${\bf }$\;$\rm }
\newcommand{\enrem}{ \end{rem}}
\newcommand{\bedefi}{\begin{defn}$\!\!${\bf }$\;$\rm }
\newcommand{\findefi}{\end{defn}}
\newcommand{\ip}[2]{\left\langle {#1}\left|{#2}\right.\right\rangle}
\newcommand{\gl}{{\mathfrak L}}
\newcommand{\LDD}{\gl(\D,\D^\times)}
\newcommand{\LD}{\Lc^\dagger(\D)}
\def\H{{\mathcal H}}
\def\K{{\mathcal K}}
\newcommand{\restr}[1]{_{\!\up{#1}}}
\def\LG{{\mathfrak L}}
\newcommand{\sH}{{\sf{H}}}
\def\S{\mathcal S}
\begin{document}
\title[Riesz-like bases]
{Riesz-like bases in rigged Hilbert spaces}
\author{Giorgia Bellomonte}
\author{Camillo Trapani}
\address{Dipartimento di Matematica e Informatica,
Universit\`a degli Studi di Palermo, I-90123 Palermo, Italy}
\email{giorgia.bellomonte@unipa.it}
\email{camillo.trapani@unipa.it}

\subjclass[2010]{42C15, 47A70} %\keywords{}

\begin{abstract}{ The notions of Bessel sequence, Riesz-Fischer sequence
and Riesz basis are generalized to a rigged Hilbert space $\D[t]
\subset \H \subset \D^\times[t^\times]$. A Riesz-like basis, in
particular, is obtained by considering a sequence $\{\xi_n\}\subset
\D$ which is mapped by a one-to-one continuous operator
$T:\D[t]\to\H[\|\cdot\|]$ into an orthonormal basis of the central
Hilbert space $\H$ of the triplet. The operator $T$ is, in general,
an unbounded operator in $\H$. If $T$ has a bounded inverse then the
rigged Hilbert space is shown to be equivalent to a triplet of
Hilbert spaces.} \keywords{Riesz basis, rigged Hilbert spaces}
\end{abstract} \maketitle

\section{Introduction}\label{sect_Introduction}

{Riesz bases {(i.e., sequences of elements $\{\xi_n\}$ of a Hilbert
space $\H$ which are transformed into orthonormal bases by some
bounded operator with bounded inverse)} often appear as eigenvectors
of nonself-adjoint operators. The simplest situation is the
following one. Let $H$ be a self-adjoint operator with discrete
spectrum defined on a subset $D(H)$ of the Hilbert space  $\H$.
Assume, to be more definite, that each eigenvalue $\lambda_n$ is
simple. Then the corresponding eigenvectors $\{e_n\}$ constitute an
orthonormal basis of $\H$. If $X$ is another operator {\em similar}
to $H$, i.e., there exists a bounded operator $T$ with bounded
inverse $T^{-1}$ which intertwines $X$ and $H$, in the sense that
$T: D(H)\to D(X)$ and $XT\xi=TH\xi$, for every $\xi \in D(H)$, then,
as it is easily seen, the vectors $\{\varphi_n\}$ with
$\varphi_n=Te_n$ are eigenvectors of $X$ and constitute a Riesz
basis for $\H$. There are, however, more general situations, mostly
coming from physical applications, where the intertwining operator
$T$ exists but at least one between $T$ and $T^{-1}$ is unbounded.
This is actually the case of the so-called cubic Hamiltonian $X=
p^2+ix^3$ of Pseudo-Hermitian Quantum Mechanics, for which it has
been proved that there is no intertwining operator bounded with
bounded inverse which makes it similar to a self-adjoint operator
\cite{siegl}. Of course, for studying these cases, one also has to
relax the notion of similarity since problems of domain may easily
arise (see \cite{JPA_CT1, JPA_CT2} for a full discussion of the
various notions of (quasi-) similarity that one may introduce).

Also, when studying the formal commutation relation $[A,B]=\Id$,
where $B$ is not the adjoint of $A$ (the so-called {\em
pseudo-bosons} studied by Bagarello \cite{bagarello, bagarello2}),
in the most favorable situation,  one finds two biorthogonal
families of vectors $\{\phi_k\}, \{\psi_k\}$, a positive
intertwining operator $K$  ($K\varphi_n=\psi_n$, $n \in {\mb N}$)
and the family $\{e_n\}$ with $e_n=K^{-1/2}\varphi_n$, $n \in {\mb
N}$, is an orthonormal family of vectors. But, in general both $K$
and $K^{-1}$ are unbounded \cite{bit1,bit2}.

These examples motivate, in our opinion, a study of possible
generalizations of the notion of Riesz basis that could cover these
situations of interest for applications.

{Whenever unbounded operators are involved, dealing with
discontinuity and with sometimes nontrivial domain problems becomes
unavoidable. Both difficulties can be by-passed if one enlarges the
set-up from Hilbert spaces to {\em rigged Hilbert spaces.}

 A {rigged Hilbert space} (RHS) consists of a triplet $(\D
,\H, \D^\times)$ where $\D$ is a dense subspace of $\H$ endowed with
a topology $t$, finer than that induced by the Hilbert norm of $\H$,
and $\D^\times$ is the conjugate dual of $\D[{t}]$, endowed with the
strong topology ${t}^\times:= \beta(\D^\times, \D)$.

Of course, one could also pose the problem of extending the notion
of Riesz basis in the more general set-up of locally convex spaces,
but the nature itself of the notion of Riesz basis requires {also} a
control of its behavior in the context of duality and, as we shall
see, a {\em Riesz-like} basis on a locally convex space $\D[t]$ will
automatically make of $\D$ the {\em smallest} space of rigged
Hilbert space. Thus it appears natural to consider rigged Hilbert
spaces from the very beginning.

On the other hand,} rigged Hilbert spaces (and their further
generalizations like e.g. partial inner product spaces)  have plenty
of applications. In Analysis they provide the general framework for
distribution theory; in Quantum Physics they give a convenient
description of the Dirac formalism \cite[Ch. 7]{jpact_book}.
Finally, rigged Hilbert spaces (e.g. those generated by the
Feichtinger algebra) or lattices  of Hilbert or Banach spaces
(mixed-norm spaces, amalgam spaces, modulation spaces) play also an
important role in signal analysis (see \cite[Ch. 8]{jpact_book}, for
an overview).

As it is known, a Riesz basis {$\{\xi_n\}$} in a Hilbert space $\H$
is also a {\em frame} \cite{casazza, Christensen, heil}; i.e., there
exist positive numbers $c, C$ such that
\begin{equation}\label{eqn_bounds} c\|\xi\|^2 \leq \sum_{n=1}^\infty |\ip{\xi}{\xi_n}|^2 \leq C \|\xi\|^2, \quad \forall \xi \in \H.\end{equation}
The peculiarity of a Riesz basis relies in its exactness or
minimality: a frame is a Riesz basis if it ceases to be a frame when
anyone of its elements is dropped out. } The notion of frame is
crucial in signal analysis and for coherent states (see, e.g.
\cite{casazza} and references therein) and in approximation theory
\cite{vinti3, vinti2, vinti1}. { A further generalization is the
notion of {\em semi-frame} \cite{JPA_balazs} for which one of the
above frame bounds is absent (lower or upper semi-frames). For
instance a  lower semi-frame has an {\em unbounded} frame operator,
with {\em bounded} inverse.}

The paper is organized as follows.

    In Section
 \ref{sect_one}, after some preliminaries,  we discuss shortly the notion of basis in a rigged Hilbert space $\D[t] \subset \H \subset \D^\times[t^\times]$. Then we introduce {\em Bessel-like} sequences in $\D^\times[t^\times]$ and {\em  Riesz-Fischer-like} sequences in $\D[t]$ and study, in the present context, their interplay in terms of duality.
  In Section \ref{sect_weak Riesz bases}, we define $\{\xi_n\}$ to be a {\em Riesz-like basis} if there exists a one-to-one linear map $T: \D\to\H$, continuous from $\D[t]$ into $\H[\|\cdot\|]$, such that $\{T\xi_n\}$ is an orthonormal basis for the central Hilbert space $\H$. Some characterizations of these bases are given.
 Finally, we consider the special case where $T$ has also a continuous inverse. %(in this case the basis $\{\xi_n\}$ is called {\em strict}).
     This additional assumption, even though natural, reveals to be quite strong, since, as we will see, the rigged Hilbert space $\D[t] \subset \H \subset \D^\times[t^\times]$ is in fact equivalent to a triplet of Hilbert spaces. An application to nonself-adjoint Hamiltonians is briefly discussed in Section \ref{sect_appl}.

\section{Preliminaries and basic aspects} \label{sect_one}
\subsection{Rigged Hilbert spaces and operators on them}

   Let $\D$ be a dense subspace of  $\H$.  A locally convex topology $t$ on $\D$ finer than the topology induced by the Hilbert norm defines, in standard fashion,
a {\em rigged Hilbert space} (RHS)
\begin{equation}\label{eq_one_intr}
\D[t] \hookrightarrow  \H \hookrightarrow\D^\times[t^\times],
\end{equation}
where $\D^\times$  is the vector space of all continuous conjugate
linear functionals on $\D[t]$, i.e., the conjugate dual of $\D[t]$,
endowed with the {\em strong dual topology} $t^\times=
\beta(\D^\times,\D)$ and $\hookrightarrow $ denotes a continuous
embedding. Since the Hilbert space $\H$ can be identified  with a
subspace of $\D^\times[t^\times]$, we will systematically read
\eqref{eq_one_intr} as a chain of topological inclusions: $\D[t]
\subset  \H \subset\D^\times[t^\times]$.  These identifications
imply that the sesquilinear form $B( \cdot , \cdot )$ that puts $\D$
and $\D^\times$ in duality is an extension of the inner product of
$\D$;
 i.e. $B(\xi, \eta) = \ip{\xi}{\eta}$, for every $\xi, \eta \in \D$ (to simplify notations we adopt the symbol $\ip{\cdot}{\cdot}$ for both of
 them).

{\beex \label{ex_triplet}Let $T$ be a closed densely defined
operator with domain $D(T)$ in Hilbert space $\H$. Let us endow
$D(T)$ with the graph norm $\|\cdot \|_T$ defined by
 $$ \|\xi\|_T= (\|\xi\|^2 + \|T\xi\|^2)^{1/2}= \|(I+T^* T)^{1/2}\xi\|, \quad \xi \in D(T).$$ With this norm $D(T)$ becomes a Hilbert space, denoted by $\H_T$. If $\H_T^\times$ denotes the Hilbert space conjugate dual of $\H_T$, then we get the triplet of Hilbert spaces $$ \H_T \subset \H \subset \H_T^\times$$ which is a particular example of rigged Hilbert space.
 \enex}

\beex \label{ex_rhs} Let $\D$ be a dense domain in Hilbert space
$\H$ and denote by $\LD$ the *-algebra consisting of all closable
operators $A$ with $D(A)=\D$, which together with their adjoints,
$A^*$, leave $\D$ invariant. The involution of $\LD$ is defined by
$A\mapsto A^\dag$, where $A^\dag= A^*\, \restr{\D}$. The *-algebra
$\LD$ defines in $\D$ the {\em graph topology} $t_\dag$ by the
family of seminorms
$$ \xi\in \D \to \|\xi\|_A:= \|(I+A^* \overline{A})^{1/2}\xi\|, \quad A \in \LD.$$
Since the topology $t_\dag$ is finer than the topology induced on
$\D$ by the Hilbert norm of $\H$, it defines in natural way a
structure of rigged Hilbert space. \enex

Let now $\D[t] \subset \H \subset \D^\times[t^\times]$ be a rigged
Hilbert space,
 and let $\LDD$ denote the vector space of all continuous linear maps from $\D[t]$ into  $\D^\times[t^\times]$. {If $\D[t]$ is barreled (e.g., reflexive)}, an involution $X \mapsto X\ad$ can be introduced in $\LDD$  by the equality
$$ \ip{X\ad \eta}{ \xi} = \overline{\ip{X\xi}{\eta}}, \quad \forall \xi, \eta \in \D.$$  Hence, in this case, $\LDD$ is a $^\dagger$-invariant vector space.

If $\D[t]$ is a {smooth} space (e.g., Fr\'echet and reflexive), then $\LDD{}$ is a quasi *-algebra over $\LD$ \cite[Definition 2.1.9]{ait_book}.\\

Let $\E, \cF \in \{\D, \H, \D^\times\}$ and $\LG(\E,\cF)$ the space
of all continuous linear maps from $\E[t_\E]$ into $\cF[t_\cF]$. We
put {$$\mc{C}(\E, \cF):=\{ X\in \LDD:\, \exists Y\in  \gl(\E, \cF),
\, Y\xi=X\xi, \forall \xi \in \D\}.$$} In particular, if $X \in
C(\D,\H)$ then its adjoint $X^\dag \in \LDD$ has an extension from
$\H$ into $\D^\times$, which we denote by the same symbol.

\smallskip The space $\LDD$ has been studied at length by several authors (see, e.g. \cite{kurst1,kurst2,kurst3, tratschi}) and several pathologies concerning their multiplicative structure have been considered (see also \cite{ait_book, jpact_book} and references therein). Recently some spectral properties of operators of these classes have also been studied \cite{BDT_JMAA}.

\subsection{Topological bases and Schauder bases}
{Let $\E[t_\E]$ be a locally convex space and $\{\xi_n\}$  a
sequence of vectors of $\E$}. We adopt the following terminology:
\begin{itemize}
\item[(i)]the sequence $\{\xi_n\}$ is {\em complete} or {\em total} if the linear span of $\{\xi_n\}$ is dense in $\E[t_\E]$;
\item[(ii)]the sequence $\{\xi_n\}$ is a {\em topological basis} for $\E$ if, for every $\phi \in \E$, there exists  a {\em unique} sequence $\{c_n\}$ of complex numbers such that
\begin{equation}\label{eq_develop} \phi = \sum_{n=1}^\infty c_n \xi_n, \end{equation}
where the series on the right hand side converges in $\E[t_\E]$.
\end{itemize} Every coefficient $c_n=c_n(\phi)$ in
\eqref{eq_develop} can be regarded as a linear functional on $\E$
and, following \cite{jarchow}, we say that
\begin{itemize}\item[(iii)] a topological basis $\{\xi_n\}$ of $\E[t_\E]$ is a {\em Schauder basis}  if the coefficient functionals $\{c_n\}$ are $t_\E$-continuous.
\end{itemize}

\berem \label{rem_23} We notice the following well-known facts.
\begin{itemize}

\item[(a)] If $\E$ has a total sequence, then it is a separable space.

\item[(b)]Every topological basis is a complete sequence; the converse is  false, in
general.

\item[(c)]  If $\{\xi_n\} $ is a topological basis for $\E$, then $\{\xi_n\}$ is {\em $\omega$-independent}; i.e., if $\sum_{n=1}^\infty c_n \xi_n=0$, then $c_n=0$, for every $n \in {\mb N}$. This in turn implies that the sequence $\{\xi_n\} $ consists of linearly independent vectors.

\item[(d)]If $\E[t_\E]$ is a Fr\'echet space, then every topological basis is a Schauder basis (\cite[Sect. 14.2, Th. 5]{jarchow}). \end{itemize}\enrem

By a slight modification of \cite[Sect. 14.3, Th. 6]{jarchow} we
have
\begin{prop} \label{prop_3.2}A complete sequence of vectors $\{\xi_n\}\subset \E$ is a Schauder basis of $\E[t_\E]$ if, and only if, for every $n \in {\mb N}$ and every
continuous seminorm $p$ on $\E[t_\E]$, there exists a continuous
seminorm $q$ on $\E[t_\E]$ such that
\begin{equation} \label{eq_one} p\left( \sum_{i=1}^n c_i \xi_i \right) \leq q\left( \sum_{i=1}^{n+m} c_i \xi_i \right)\end{equation}
where $c_1, \ldots, c_{n+m}$ are arbitrary complex numbers and $ m$
is an arbitrary natural number.
\end{prop}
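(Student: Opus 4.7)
The key observation is that \eqref{eq_one} expresses, in disguised form, the continuity of the truncation operator $S_n\colon \sum_{i=1}^{n+m} c_i\xi_i \mapsto \sum_{i=1}^n c_i\xi_i$ on the linear span of $\{\xi_n\}$. The strategy is to make this correspondence precise, extend $S_n$ to the whole space by density, and identify the differences $S_n-S_{n-1}$ with (multiples of) the coefficient functionals of the basis. Modelled on \cite[Sect.~14.3, Th.~6]{jarchow}, the proof splits into two directions.

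$(\Rightarrow)$ If $\{\xi_n\}$ is a Schauder basis with continuous coefficient functionals $c_i$, then $S_n\phi := \sum_{i=1}^n c_i(\phi)\xi_i$ is a continuous linear operator on $\E[t_\E]$. Given a continuous seminorm $p$, its continuity yields a continuous seminorm $q$ with $p(S_n\phi)\le q(\phi)$ for all $\phi\in\E$; specializing to $\phi=\sum_{i=1}^{n+m} c_i\xi_i$ and using uniqueness of the expansion to identify $S_n\phi=\sum_{i=1}^n c_i\xi_i$ gives \eqref{eq_one}.

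$(\Leftarrow)$ Assume \eqref{eq_one}. First I would check that $\{\xi_n\}$ is linearly independent: if $\sum_{i=1}^N c_i\xi_i=0$, then applying \eqref{eq_one} with successive $n\le N$ forces $\sum_{i=1}^n c_i\xi_i=0$ for every such $n$, and telescoping yields $c_n\xi_n=0$, whence $c_n=0$. This legitimizes the definition, on $V:=\mathrm{span}\{\xi_n\}$, of the operator $S_n\colon V\to V$, $S_n\bigl(\sum_{i=1}^N c_i\xi_i\bigr):=\sum_{i=1}^{\min(n,N)} c_i\xi_i$; the hypothesis \eqref{eq_one} is exactly its $t_\E$-continuity. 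Since the image of $S_n$ is the finite-dimensional (hence complete, Hausdorff) subspace $\mathrm{span}\{\xi_1,\dots,\xi_n\}$ and $V$ is $t_\E$-dense in $\E$ by completeness of $\{\xi_n\}$, $S_n$ extends uniquely to a continuous linear $\tilde S_n\colon\E\to\E$. The prospective coefficient functionals $c_n\colon\E\to\mathbb{C}$, defined by $c_n(\phi)\xi_n:=\tilde S_n\phi-\tilde S_{n-1}\phi$ (with $\tilde S_0:=0$), are then continuous. It remains to verify that $\tilde S_n\phi\to\phi$ in $\E[t_\E]$ for every $\phi\in\E$: for $\phi\in V$ this is immediate since $\tilde S_n\phi=\phi$ for $n$ large, and for general $\phi$ one picks $\phi_0\in V$ approximating $\phi$ and estimates
\[
\tilde S_n\phi - \phi = \tilde S_n(\phi-\phi_0) + (\tilde S_n\phi_0 - \phi_0) + (\phi_0 - \phi)
\]
in a given continuous seminorm $p$; the middle term vanishes for $n$ large while the outer terms are controlled by the closeness of $\phi_0$ to $\phi$. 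Uniqueness of the expansion then follows from the linear independence of $\{\xi_n\}$ and the continuity of the $c_n$.

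\textbf{Main obstacle.} The step $\tilde S_n\phi\to\phi$ for $\phi\notin V$ is the delicate point. Taken literally, \eqref{eq_one} delivers continuity of each individual $\tilde S_n$ but permits the seminorm $q$ to depend on $n$, so one cannot directly bound $p\bigl(\tilde S_n(\phi-\phi_0)\bigr)$ uniformly in $n$. Closing the three-term estimate requires an equicontinuity-type control of the family $\{\tilde S_n\}$; in the barreled/Fr\'echet context natural for the rigged Hilbert spaces of this paper (and for the Jarchow theorem being adapted), this is supplied by a Banach--Steinhaus argument, after which the proof concludes.
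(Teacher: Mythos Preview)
The paper does not actually prove this proposition; it merely prefaces the statement with ``By a slight modification of \cite[Sect.~14.3, Th.~6]{jarchow}'' and gives no further argument. Your proposal is precisely such an unfolding of the Jarchow argument --- identifying \eqref{eq_one} with continuity of the partial-sum projections, extending them by density, and recovering the coefficient functionals as $\tilde S_n-\tilde S_{n-1}$ --- and the obstacle you flag (equicontinuity via Banach--Steinhaus in the barreled setting, needed to pass from $V$ to all of $\E$) is exactly where the paper's ambient hypotheses on $\D[t]$ come into play; so your approach coincides with the one the paper defers to.
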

{ As is known, a Riesz basis $\{\xi_n\}$ in Hilbert space $\H$ is
transformed by some bounded operator into an orthonormal basis of
$\H$; this is equivalent to saying that a new (and equivalent) inner
product can be introduced in $\H$ which makes of $\{\xi_n\}$ an
orthonormal basis. A similar notion for locally convex spaces, calls
immediately on the stage rigged Hilbert spaces.

\begin{prop}\label{prop_newrhs}
Let $\{\xi_n\}\subset \E$ be a Schauder basis of $\E[t_\E]$ and
assume that there exists a one-to-one continuous linear map $T$ from
$\E[t_\E]$ into some Hilbert space $\K[\|\cdot\|]$ such that
$\{T\xi_n\}$ is an orthonormal basis of $\K$. Then there exists an
inner product $\ip{\cdot}{\cdot}_+$ on $\E \times \E$ such that the
topology induced on $\E$ by the norm $\|\cdot\|_+$ is coarser than
$t_\E$ and $\{\xi_n\}$ is an orthonormal basis.
\end{prop}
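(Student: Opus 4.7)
The plan is to transport the Hilbert space structure of $\K$ back to $\E$ through $T$. Explicitly, I would set
\[
\ip{\xi}{\eta}_+ := \ip{T\xi}{T\eta}, \qquad \xi,\eta\in\E,
\]
and $\|\xi\|_+ := \|T\xi\|$. Injectivity of $T$ ensures that $\|\xi\|_+=0$ implies $\xi=0$, so $\ip{\cdot}{\cdot}_+$ is a genuine inner product on $\E$. Continuity of $T:\E[t_\E]\to\K[\|\cdot\|]$ supplies a $t_\E$-continuous seminorm $p$ on $\E$ with $\|T\xi\|\leq p(\xi)$ for every $\xi\in\E$; hence $\|\xi\|_+\leq p(\xi)$, which shows that the norm topology induced by $\|\cdot\|_+$ is coarser than $t_\E$.

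The orthonormality of $\{\xi_n\}$ in $(\E,\ip{\cdot}{\cdot}_+)$ is then immediate, since
\[
\ip{\xi_n}{\xi_m}_+=\ip{T\xi_n}{T\xi_m}=\delta_{n,m}.
\]
To see that $\{\xi_n\}$ is moreover a basis with respect to $\|\cdot\|_+$, I would start from the Schauder expansion in $\E[t_\E]$: for each $\xi\in\E$,
\[
\xi = \sum_{n=1}^\infty c_n(\xi)\,\xi_n \qquad (\text{in } t_\E).
\]
Applying the continuous map $T$ yields $T\xi=\sum_{n=1}^\infty c_n(\xi)\, T\xi_n$ in $\K$, and comparing with the Fourier expansion of $T\xi$ relative to the orthonormal basis $\{T\xi_n\}$ of $\K$ forces
\[
c_n(\xi)=\ip{T\xi}{T\xi_n}=\ip{\xi}{\xi_n}_+.
\]
Since $\|\cdot\|_+$ is coarser than $t_\E$, the partial sums $\sum_{n=1}^{N}\ip{\xi}{\xi_n}_+\,\xi_n$ also converge to $\xi$ in the new norm, which is precisely the orthonormal basis property.

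I do not foresee a serious obstacle: the whole argument rests on the interplay of three convergences (in $t_\E$, in $\|\cdot\|$ after applying $T$, and in $\|\cdot\|_+$), together with the uniqueness of the Schauder coefficients. The one mild subtlety worth flagging is that $(\E,\|\cdot\|_+)$ need not be complete, so ``orthonormal basis'' must be read in the sense that every $\xi\in\E$ is the $\|\cdot\|_+$-limit of its Fourier partial sums; equivalently, the completion of $(\E,\|\cdot\|_+)$ is unitarily isomorphic to $\K$ via the (continuous) extension of $T$. This is precisely the viewpoint that motivates the Riesz-like basis framework developed in Section~\ref{sect_weak Riesz bases}.
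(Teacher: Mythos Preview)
Your proposal is correct and follows exactly the same approach as the paper: define $\ip{\xi}{\eta}_+:=\ip{T\xi}{T\eta}$ and read off all the claims. The paper's proof is in fact a single line stating precisely this definition and declaring the rest immediate, so you have simply spelled out the details the authors left to the reader.
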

\begin{proof} Define $\ip{\xi}{\eta}_+:=\ip{T\xi}{T\eta}$, $\xi, \eta \in \E$. Then all the statements follow immediately.

\end{proof}
Then, under the conditions of Proposition \ref{prop_newrhs},  one
can consider $\E$ as a subspace of the Hilbert space completion
$\H_+$ of $\E[\|\cdot\|_+]$, so that a rigged Hilbert space can be
built in natural way: $\E[t_\E] \subset \H_+\subset
\E^\times[t_\E^\times]$. This is essentially the reason why, as
announced in the Introduction, we will confine ourselves within this
framework.}

\bigskip
Let $\D[t] \subset \H \subset \D^\times[t^\times]$ be a rigged
Hilbert space and $\{\xi_n\}$ a Schauder basis for $\D[t]$. Then,
every $f \in \D$ can be written as $\sum_{n=1}^\infty c_n(f) \xi_n$,
for uniquely determined suitable coefficients $c_n(f)$. Since every
$c_n$ is a {continuous} linear functional on $\D[t]$, there exists a
sequence $\{\zeta_n\} \subset \D^\times$ such that
$$ c_n(f) = \overline{\ip{\zeta_n}{f}}, \quad \forall n\in {\mb N}, f \in \D.$$
For every $n \in {\mb N}$, the vector $\zeta_n$ is uniquely
determined. If we take $f= \xi_k$, then it is clear that
$c_n(\xi_k)=\delta_{n,k}$. Hence $\ip{\zeta_n}{\xi_k}=
\delta_{n,k}$; i.e., the sequences $\{\xi_n\}$ and $\{\zeta_n\}$ are
{\em biorthogonal}.

More precisely,
{ \begin{prop} Let $\{\xi_n\}$ be a topological basis for $\D[t]$. The following statements are equivalent.
\begin{itemize}
\item[(i)] $\{\xi_n\}$ is a Schauder basis.
\item[(ii)] $\{\xi_n\}$ is minimal; i.e., $\xi_k \not\in \overline{{\rm span}\{\xi_m; m\neq k\}}^t$, for every $k \in {\mb N}$.
\item[(iii)] There exists a unique sequence $\{\zeta_n\}\subset \D^\times$ such that $\{\xi_n\}$ and $\{\zeta_n\}$ are biorthogonal.
%\item[(iv)] If $\{c_n\}\in \ell^2$ and $ \sum_{k=1}^\infty c_k \xi_k=0$, then $c_n=0$, for every $n \in {\mb N}$.
\end{itemize}

\end{prop}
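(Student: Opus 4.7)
The plan is to establish a cycle (i)$\Rightarrow$(iii)$\Rightarrow$(ii)$\Rightarrow$(i), using in an essential way that $\D[t]$ is locally convex and that by definition of the rigged Hilbert space every continuous conjugate linear functional on $\D[t]$ is represented by an element of $\D^\times$ via the sesquilinear form extending the inner product.

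First, for (i)$\Rightarrow$(iii), I would simply reproduce the argument already sketched in the text preceding the proposition: if $\{\xi_n\}$ is a Schauder basis, each coefficient functional $c_n$ is $t$-continuous and conjugate linear, hence by the duality of the RHS there is a unique $\zeta_n\in\D^\times$ with $c_n(f)=\overline{\ip{\zeta_n}{f}}$. Evaluating at $f=\xi_k$ and using uniqueness of the expansion gives $\ip{\zeta_n}{\xi_k}=\delta_{n,k}$. Uniqueness of the sequence $\{\zeta_n\}$ follows from the completeness of $\{\xi_n\}$ in $\D[t]$: two elements of $\D^\times$ that agree on the total set $\{\xi_n\}$ must coincide.

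For (iii)$\Rightarrow$(ii), I would argue by contradiction. If some $\xi_k$ lay in the $t$-closure of $\mathrm{span}\{\xi_m:m\neq k\}$, then by continuity the functional $\ip{\zeta_k}{\cdot}$, which is identically zero on each $\xi_m$ with $m\neq k$, would have to vanish also at $\xi_k$. This contradicts the biorthogonality relation $\ip{\zeta_k}{\xi_k}=1$.

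The main step, and the one requiring the most care, is (ii)$\Rightarrow$(i). Here, for each $k\in\mathbb N$, I would use the Hahn-Banach theorem in the locally convex space $\D[t]$ to produce a $t$-continuous linear form $\ell_k$ which vanishes on the closed subspace $\overline{\mathrm{span}\{\xi_m:m\neq k\}}^t$ and takes value $1$ at $\xi_k$; minimality of $\{\xi_n\}$ is precisely what makes this separation possible. The anti-linear companion of $\ell_k$ is represented by a unique $\zeta_k\in\D^\times$ with $\ip{\zeta_k}{\xi_m}=\delta_{k,m}$. Now, for any $f\in\D$, the expansion $f=\sum_{n=1}^\infty c_n(f)\xi_n$ converges in $t$; since $\zeta_k$ is $t$-continuous,
\begin{equation*}
\overline{\ip{\zeta_k}{f}}=\lim_{N\to\infty}\sum_{n=1}^N c_n(f)\,\overline{\ip{\zeta_k}{\xi_n}}=c_k(f).
\end{equation*}
Thus each $c_k$ coincides with the $t$-continuous functional $\overline{\ip{\zeta_k}{\cdot}}$, and $\{\xi_n\}$ is a Schauder basis. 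The principal obstacle in this direction is verifying that the Hahn-Banach separation applies, which reduces to the fact that $\D[t]$ is locally convex (built into the definition of a rigged Hilbert space); once this is granted, the argument is routine.
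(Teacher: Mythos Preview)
Your argument is correct. The paper itself does not give a self-contained proof of this proposition: it simply cites \cite[Sect.~14.2, Prop.~3]{jarchow} for (i)$\Leftrightarrow$(ii), \cite[Sect.~14.2, Prop.~1]{jarchow} for (ii)$\Rightarrow$(iii), and calls (iii)$\Rightarrow$(ii) trivial. Your cycle (i)$\Rightarrow$(iii)$\Rightarrow$(ii)$\Rightarrow$(i) is the standard route and is essentially what lies behind those citations: the Hahn--Banach separation you invoke for (ii)$\Rightarrow$(i) is precisely the content of Jarchow's argument, and your identification of the coefficient functionals with the continuous biorthogonal functionals is exactly how one passes from minimality back to the Schauder property once a topological basis is already in hand. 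One small slip: the coefficient functionals $c_n$ are \emph{linear}, not conjugate linear; it is their conjugates $\overline{c_n}$ that live in the conjugate dual $\D^\times$, which is why the representation reads $c_n(f)=\overline{\ip{\zeta_n}{f}}$. Your formulas are consistent with this, so the issue is purely terminological.
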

$({\rm i}) \Leftrightarrow ({\rm ii})$ is proved in \cite[Sect.14.2,
Prop. 3]{jarchow} and $ ({\rm ii}) \Rightarrow ({\rm iii})$ in
\cite[Sect.14.2, Prop. 1]{jarchow};  $({\rm iii}) \Rightarrow ({\rm ii})$ is trivial. See also \cite[Th.
6.1.1]{Christensen}.}

\begin{prop}\label{prop_2.4.r} Let $\{\xi_n\}$ be a Schauder basis for $\D[t]$. Then there exists a sequence $\{\zeta_n\}$ of vectors of $\D^\times$ such that
\begin{itemize}
\item[(i)] the sequences $\{\xi_n\}$ and $\{\zeta_n\}$ are { biorthogonal};
\item[(ii)] for every $f\in \D$,
\begin{equation}\label{eq three} f= \sum_{n=1}^\infty \overline{\ip{\zeta_n}{f}}\xi_n;\end{equation}
\item[(iii)]The partial sum operator $S_n$, given by
$$S_n f = \sum_{k=1}^n \overline{\ip{\zeta_k}{f}}\xi_k, \quad f\in \D ,$$
is continuous from $\D[t]$ into $\D[t]$ and has an adjoint
$S_n^\dag$ everywhere defined in $\D^\times$ given by
$$ S^\dag_n \Psi= \sum_{k=1}^n {\ip{\Psi}{\xi_k}}\zeta_k, \quad \Psi \in \D^\times.$$
\end{itemize}
\end{prop}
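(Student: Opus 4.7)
The existence of the biorthogonal sequence $\{\zeta_n\}\subset\D^\times$ is essentially already extracted in the paragraph preceding the statement, so I would begin by making that explicit. Since $\{\xi_n\}$ is a Schauder basis, each coefficient functional $c_n$ on $\D[t]$ is continuous and linear, so $f\mapsto\overline{c_n(f)}$ is continuous and antilinear, hence represented by a unique $\zeta_n\in\D^\times$ via $\overline{c_n(f)}=\ip{\zeta_n}{f}$. Substituting $f=\xi_k$ into the unique expansion $\xi_k=\sum_j \delta_{j,k}\xi_j$ gives $c_n(\xi_k)=\delta_{n,k}$, hence $\ip{\zeta_n}{\xi_k}=\delta_{n,k}$, proving (i); statement (ii) is then just the rewriting of \eqref{eq_develop} with $c_n(f)=\overline{\ip{\zeta_n}{f}}$.

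The substantive content is (iii). For the continuity of $S_n:\D[t]\to\D[t]$, I would note that $S_n$ is a finite-rank operator whose range lies in the fixed finite-dimensional subspace $\mathrm{span}\{\xi_1,\ldots,\xi_n\}\subset\D$; given any continuous seminorm $p$ on $\D[t]$, the crude estimate
\[
p(S_n f)\leq \sum_{k=1}^n |\ip{\zeta_k}{f}|\,p(\xi_k)
\]
combined with the $t$-continuity of each antilinear functional $f\mapsto\ip{\zeta_k}{f}$ (which is immediate from $\zeta_k\in\D^\times$) produces a continuous seminorm $q$ on $\D[t]$ dominating $p\circ S_n$.

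For the formula for $S_n^\dag$, the plan is to \emph{define} $S_n^\dag\Psi$ on $\D^\times$ by the proposed finite-sum expression — which is patently everywhere-defined, since each $\ip{\Psi}{\xi_k}$ is a scalar and each $\zeta_k\in\D^\times$ — and then verify the defining duality relation by direct bookkeeping with the sesquilinear pairing. For $\Psi\in\D^\times$ and $f\in\D$ one obtains
\[
\ip{S_n^\dag\Psi}{f}=\sum_{k=1}^n \ip{\Psi}{\xi_k}\ip{\zeta_k}{f}=\ip{\Psi}{S_n f},
\]
which is the required adjoint identity; when $\Psi\in\D$ it specializes to the $\LDD$ involution $\ip{X\ad\eta}{\xi}=\overline{\ip{X\xi}{\eta}}$ after using $\ip{\eta}{X\xi}=\overline{\ip{X\xi}{\eta}}$ on $\D\times\D$. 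The $t^\times$-continuity of $S_n^\dag$, should one wish to place it in a concrete operator class on $\D^\times$, is automatic since each coefficient map $\Psi\mapsto\ip{\Psi}{\xi_k}$ is by definition among the seminorms generating $\beta(\D^\times,\D)$.

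I expect the whole argument to be essentially mechanical; the only mildly delicate point is keeping the sesquilinear conventions straight when pulling scalars in and out of the duality pairing (in particular, the two conjugations in the step $\sum_k \overline{\overline{\ip{\zeta_k}{f}}}\ip{\Psi}{\xi_k}=\sum_k \ip{\zeta_k}{f}\ip{\Psi}{\xi_k}$), which is what makes the two displayed sums in the adjoint computation agree without an extra conjugation.
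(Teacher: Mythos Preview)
Your proposal is correct and matches the paper's approach; indeed, the paper simply writes ``The proof is straightforward'' and gives no details, while you have correctly spelled out the routine verification using the continuity of the coefficient functionals, the finite-rank estimate for $S_n$, and the duality computation $\ip{S_n^\dag\Psi}{f}=\ip{\Psi}{S_nf}$ (including the careful handling of the double conjugation arising from the conjugate-linearity of $\ip{\cdot}{\cdot}$ in the second slot).
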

{The proof is straightforward. }

\smallskip

\begin{prop} \label{prop_1.10} Let $\{\xi_n\}$ be a Schauder basis for $\D[t]$. Then, the following statements hold.
\begin{itemize}

\item[(i)] The sequence $\{\zeta_n\}$ in  \eqref{eq three} is complete in $\D^\times[\tau]$, where $\tau$ is a topology of the conjugate dual pair $(\D^\times, \D)$. If $\D[t]$ is reflexive, $\{\zeta_n\}$ is complete also with respect to $t^\times$.

    \item[(ii)] The sequence $\{\zeta_n\}$  is a basis for $\D^\times$ with respect to the weak topology; i.e., if $\Psi \in \D^\times$ one has
    \begin{equation}\label{eq_two}\ip{\Psi}{f}= \ip{\sum_{k=1}^\infty {\ip{\Psi}{\xi_k}}\zeta_k}{f}= \sum_{k=1}^\infty {\ip{\Psi}{\xi_k}}\ip{\zeta_k}{f}, \quad \forall f \in \D.\end{equation}
    \end{itemize}
\end{prop}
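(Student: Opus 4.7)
My plan is to prove (i) by first settling the weakest case and then using a dual-pair argument. For completeness of $\{\zeta_n\}$ in the weak topology $\sigma(\D^\times,\D)$, I note that the linear span of a set $M\subset \D^\times$ is $\sigma(\D^\times,\D)$-dense if and only if every $f\in\D$ with $\ip{m}{f}=0$ for all $m\in M$ vanishes (this is the standard bipolar/Hahn--Banach criterion in a dual pair, where the dual of $\D^\times[\sigma(\D^\times,\D)]$ is $\D$). Applied to $M=\{\zeta_n\}$, the expansion \eqref{eq three} from Proposition \ref{prop_2.4.r} reduces this to the triviality $f=\sum_n\overline{\ip{\zeta_n}{f}}\xi_n=0$. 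Since the closure of a linear subspace coincides for every locally convex topology compatible with a given dual pair, the span of $\{\zeta_n\}$ is automatically $\tau$-dense for every compatible topology $\tau$. The reflexive case is then an instance of this: reflexivity of $\D[t]$ means exactly that the strong dual of $\D^\times[t^\times]$ is $\D$, so $t^\times=\beta(\D^\times,\D)$ is compatible with the dual pair $(\D^\times,\D)$ and completeness with respect to $t^\times$ follows.

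For (ii), I would apply $\Psi\in\D^\times$ to the expansion $f=\sum_{n=1}^\infty \overline{\ip{\zeta_n}{f}}\xi_n$, which converges in $\D[t]$. Because $\Psi$ is a \emph{continuous conjugate-linear} functional on $\D[t]$, I can pass it through the convergent series, and the conjugate-linearity cancels the outer complex conjugate on the coefficients, producing
\[
\ip{\Psi}{f}=\sum_{n=1}^\infty \ip{\zeta_n}{f}\,\ip{\Psi}{\xi_n}
\]
for every $f\in\D$. This is precisely the identity \eqref{eq_two}, which is the statement that the series $\sum_k \ip{\Psi}{\xi_k}\zeta_k$ converges to $\Psi$ in $\D^\times[\sigma(\D^\times,\D)]$.

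I expect the only genuinely subtle point to be the passage from $\sigma(\D^\times,\D)$-completeness to $\tau$-completeness for an arbitrary compatible $\tau$ (and, as a special case, to $t^\times$ under reflexivity): it must be phrased in terms of the invariance of the closure of convex/linear sets under change of compatible topology (Mackey's theorem), rather than attempting a direct approximation argument with nets in $\tau$, which would be considerably more awkward.
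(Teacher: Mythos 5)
Your proof is correct. For (i) you follow essentially the paper's route: a Hahn--Banach annihilator argument in the conjugate dual pair, with the expansion \eqref{eq three} forcing any $f\in\D$ that annihilates all the $\zeta_n$ to vanish; the paper runs this directly for an arbitrary topology $\tau$ of the pair (whose dual is $\D$) and settles the reflexive case by identifying $t^\times$ with a compatible (Mackey-type) topology, whereas you treat the weak topology first and then invoke the invariance of closures of linear subspaces under all compatible topologies --- a cosmetic difference. For (ii) your route is genuinely more direct than the paper's: the authors first verify $S_n^\dag\Phi=\Phi$ for finite linear combinations $\Phi$ of the $\zeta_k$, then use the completeness from (i) to approximate $\Psi$ weakly and conclude with a three-term estimate built on $S_nf\to f$. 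You instead apply the $t$-continuous conjugate-linear functional $\Psi$ termwise to the $t$-convergent expansion $f=\sum_{n}\overline{\ip{\zeta_n}{f}}\,\xi_n$, the outer conjugate being absorbed by conjugate-linearity; equivalently, $\ip{S_n^\dag\Psi}{f}=\ip{\Psi}{S_nf}\to\ip{\Psi}{f}$. This yields \eqref{eq_two} in two lines, makes no use of part (i), and isolates exactly what is needed (continuity of $\Psi$ on $\D[t]$ plus convergence of the Schauder expansion). What the paper's longer argument displays explicitly is the action of the adjoint partial-sum operators $S_n^\dag$ on $\D^\times$, but the content is the same weak convergence $S_n^\dag\Psi\to\Psi$, so your shortcut loses nothing.
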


\begin{proof}
(i): Assume that $\{\zeta_n\}$ is not complete. Then there exists
$f\neq 0$, $ f \in \D$ (regarded as the conjugate dual of
$\D^\times[\tau]$) such that $\ip{\zeta_n}{f}=0$, for every $n \in
{\mb N}$. From \eqref{eq three} it follows that $f=0$, a
contradiction. If $\D[t]$ is reflexive, the statement follows from
the equality of $t$ and the Mackey topology $\tau(\D^\times, \D)$.

(ii): Assume first that $\Phi \in \D^\times$ is of the form $\Phi=
\sum_{k=1}^n c_k \zeta_k$. Then it is easily seen that $S_n^\dag
\Phi = \Phi$. Now, if $\Psi\in \D^\times$, for every $f \in \D$ and
for every $\epsilon>0$, there exists $\Phi=\sum_{k=1}^n c_k \zeta_k$
such that $|\ip{\Psi- \Phi}{f}|<\epsilon$. On the other hand, since
$S_n f \to f$, there exists $n_\epsilon \in {\mb N}$ such that for
$n>n_\epsilon$, $|\ip{\Psi -\Phi}{S_nf-f}|<\epsilon$. Thus we have

\begin{align*} \left|\ip{S_n^\dag \Psi - \Psi}{f}\right| &\leq \left|\ip{S_n^\dag \Psi - S_n^\dag\Phi}{f}\right| + \left|\ip{S_n^\dag \Phi - \Phi}{f}\right|+|\ip{\Phi - \Psi}{f}|\\
& =|\ip{ \Psi - \Phi}{S_n f}|+|\ip{\Phi - \Psi}{f}|\\
&\leq |\ip{ \Psi - \Phi}{S_n f-f}|+2|\ip{\Phi - \Psi}{f}|<3\epsilon.
\end{align*}
Hence $S_n^\dag \Psi\to \Psi$ weakly, or
$$ \ip{\Psi}{f}= \ip{\sum_{k=1}^\infty {\ip{\Psi}{\xi_k}}\zeta_k}{f}= \sum_{k=1}^\infty {\ip{\Psi}{\xi_k}}\ip{\zeta_k}{f}.$$

\end{proof}

For  $f\in \D \subset \D^\times$,  \eqref{eq_two} gives in
particular
$$\|f\|^2 = \sum_{k=1}^\infty {\ip{f}{\xi_k}}\ip{\zeta_k}{f}, \quad \forall f \in \D;$$
so that the series on the right hand side is convergent, for every
$f \in \D$.

\berem There is a wide interest and a rich literature on bases or
frames in locally convex spaces (in particular, Banach spaces)
 and on their existence, see, e.g. \cite{casazzachris, pilipovic} and \cite{bonet} and references therein. \enrem

\subsection{Bessel- and Riesz-Fischer-like sequences}
We assume, from now on, that $\D[t]$ is complete and reflexive.
\bedefi Let $\{\zeta_n\}$ be a sequence in $\D^\times$. We say that
$\{\zeta_n\}$ is a {\em Bessel-like sequence} if, for every bounded
subset $\cM$ of $\D[t]$,
\begin{equation}\label{eqn_bessel} \sup_{\eta\in \cM}\sum_{k=1}^\infty|\ip{\zeta_k}{\eta}|^2=:\gamma_{\cM}<\infty.\end{equation}
\findefi

\begin{prop}\label{prop_210} A sequence $\{\zeta_n\}$ of elements of $\D^\times$ is Bessel-like if and only if
$$\sum_{k=1}^\infty|\ip{\zeta_k}{\eta}|^2< \infty, \quad \forall \eta\in \D$$
and the {\em analysis operator}
$$F: \eta \in \D[t] \to \{\overline{\ip{\zeta_k}{\eta}}\}\in \ell^2[\| \cdot \|_2]$$
is continuous.
\end{prop}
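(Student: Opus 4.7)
My plan is to treat the two implications separately. The easy direction --- that (a) and continuity of $F$ together imply the Bessel-like property --- will follow from the general fact that a continuous linear map from a locally convex space into a normed space sends bounded sets to bounded sets. Indeed, for any bounded $\cM\subset \D[t]$, the image $F(\cM)\subset \ell^2$ will be norm-bounded, and the identity $\|F\eta\|_2^2=\sum_{k=1}^\infty|\ip{\zeta_k}{\eta}|^2$ turns this bound into exactly \eqref{eqn_bessel}.

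For the converse, assume $\{\zeta_n\}$ is Bessel-like. First, specializing \eqref{eqn_bessel} to the bounded singleton $\cM=\{\eta\}$ immediately gives the pointwise summability in (a), so $F$ is at least well-defined as a linear map $\D\to\ell^2$. The substantial step is to deduce that $F$ is continuous on $\D[t]$. Here I would introduce the finite truncations
\[
F^{(n)}\eta := (\overline{\ip{\zeta_1}{\eta}},\ldots,\overline{\ip{\zeta_n}{\eta}},0,0,\ldots),\qquad \eta\in\D,
\]
each of which is a continuous linear map from $\D[t]$ to $\ell^2$, being a finite combination of the continuous conjugate linear functionals $\zeta_1,\ldots,\zeta_n\in\D^\times$. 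By condition (a) just established, $F^{(n)}\eta\to F\eta$ in the $\ell^2$-norm for every $\eta\in\D$.

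At this point I would invoke the Banach--Steinhaus theorem. The standing assumption that $\D[t]$ is reflexive ensures that $\D[t]$ is barreled, and Banach--Steinhaus in the barreled setting asserts that a pointwise convergent sequence of continuous linear maps from a barreled space into a Banach space has a continuous limit. Applied to the sequence $F^{(n)}\to F$ with range $\ell^2$, this yields continuity of $F$, completing the argument.

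The only delicate point is the topological underpinning of the converse: one has to recognize that reflexivity supplies the barreledness needed for Banach--Steinhaus (without this, bounded-on-bounded-sets need not entail continuity for general locally convex $\D[t]$). I expect no computational obstacle beyond this conceptual observation.
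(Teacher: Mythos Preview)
Your argument is correct. The backward implication is essentially identical to the paper's. For the forward implication, however, you take a genuinely different route.

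The paper proceeds by duality: it first constructs the \emph{synthesis} operator
\[
U:\{a_n\}\in\ell^2\ \longmapsto\ \sum_{n=1}^\infty a_n\zeta_n\in\D^\times,
\]
shows by a Cauchy--Schwarz estimate against the Bessel bound $\gamma_\cM$ that the partial sums form a Cauchy sequence in $\D^\times[t^\times]$ (invoking quasi-completeness of the reflexive dual), and that $U$ is continuous. Reflexivity then yields a continuous adjoint $U^\dag:\D[t]\to\ell^2$, which is identified with $F$. Your route instead works directly with $F$: the finite truncations $F^{(n)}$ are manifestly continuous, they converge pointwise to $F$ by the summability just obtained, and Banach--Steinhaus on the barreled (because reflexive) domain $\D[t]$ yields continuity of the limit. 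Both arguments hinge on reflexivity, but use different consequences of it (quasi-completeness of $\D^\times$ and existence of continuous adjoints versus barreledness of $\D$). The paper's detour has the side benefit of establishing, along the way, that the synthesis operator $F^\dag$ is well defined and continuous from $\ell^2$ into $\D^\times$ --- a fact used immediately afterward to define the frame operator $F^\dag F\in\LDD$. Your approach is cleaner for the proposition as stated, but you would need a separate (short) argument to recover $F^\dag$ afterwards.
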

\begin{proof} Let $\{\zeta_n\}$ be Bessel-like. From \eqref{eqn_bessel} it is clear that for every $\eta \in \D$, $\sum_{k=1}^\infty|\ip{\zeta_k}{\eta}|^2< \infty$.

Now we prove that $$U: \{a_n\}\in \ell^2 \to \sum_{n=1}^\infty
a_n\zeta_n $$ is a well-defined continuous linear map from
$\ell^2[\|\cdot\|_2]$ into $\D^\times[t^\times]$.

 We begin with proving that $\sum_{n=1}^\infty a_n\zeta_n$ converges in $\D^\times[t^\times]$. Let $\cM$ be a bounded subset of $\D[t]$. Then, for $n>m$,
\begin{align*}
\sup_{\eta\in \cM}\left|\ip{\sum_{k=1}^n a_k\zeta_k - \sum_{k=1}^m a_k\zeta_k}{\eta} \right| &= \sup_{\eta\in \cM}\left|\ip{\sum_{k=m+1}^n a_k\zeta_k}{\eta}\right|\\ &\leq \sup_{\eta\in \cM}\sum_{k=m+1}^n |a_k\ip{\zeta_k}{\eta}|\\
&\leq \left(\sum_{k=m+1}^n |a_k|^2\right)^{1/2} \cdot \sup_{\eta\in
\cM}\left( \sum_{k=1}^\infty|\ip{\zeta_k}{\eta}|^2\right)^{1/2}\\ &
\leq \gamma_{\cM}^{1/2} \left(\sum_{k=m+1}^n |a_k|^2\right)^{1/2}
\to 0, \; \mbox{as }n,m\to \infty.
\end{align*}
Hence the partial sums $\sum_{k=1}^n a_k\zeta_k$ constitute a Cauchy
sequence in $\D^\times[t^\times]$ and, since $\D^\times[t^\times]$
being {reflexive is\footnote{A locally convex space space is said to
be quasi-complete if every closed bounded subset is complete.}
quasi-complete \cite[Ch. IV, 5.5, Cor.1]{Schaefer} }, the series
converges in $\D^\times$. Moreover, by simple modifications of the
previous inequalities it follows also that
$$ \sup_{\eta\in \cM}\left|\sum_{k=1}^\infty a_k\ip{\zeta_k}{\eta}\right|\leq \gamma_{\cM}^{1/2} \|\{a_n\}\|_2 .$$
Thus, $U$ is continuous from $\ell^2[\|\cdot\|_2]$ into
$\D^\times[t^\times]$ and therefore by the reflexivity of $\D[t]$,
$U$ has a continuous adjoint map $U^\dag: \D[t] \to
\ell^2[\|\cdot\|_2]$. It is easily checked that $$U^\dag \eta=
\{\overline{\ip{\zeta_n}{\eta}}\}, \quad \forall \eta \in \D.$$ Thus
$U^\dag =F$ and $F$ is continuous.

Conversely, let us assume that $\{\ip{\zeta_k}{\eta}\}\in \ell^2$
and that $F$ is continuous. This implies that there exists a
continuous seminorm $p$ on $\D[t]$ such that
$$ \|F \eta\|_2 = \left(\sum_{k=1}^\infty|\ip{\zeta_k}{\eta}|^2 \right)^{1/2} \leq p(\eta), \quad \forall \eta \in \D.$$
Thus, if $\cM$ is a bounded subset of $\D[t]$, we get
$$ \sup_{\eta\in \cM}\sum_{k=1}^\infty|\ip{\zeta_k}{\eta}|^2 = \sup_{\eta\in \cM}\|F \eta\|_2^2 \leq \sup_{\eta\in \cM}p(\eta)^2<\infty, \quad \forall \eta \in \D.$$
Hence $\{\zeta_n\}$ is a Bessel-like sequence.
\end{proof}

As usual, we will call the operator $F^\dag
{:\{a_n\}\in\ell^2\to\sum_{n=1}^\infty a_n\zeta_n\in\D^\times}$, the
{\em syntesis} operator of the sequence $\{\zeta_n\}$.

From Proposition \ref{prop_210} and from the fact that
\eqref{eqn_bessel} is not affected from a possible reordering of the
elements $\{\zeta_n\}$ it follows that if $\{\zeta_n\}$ is a
Bessel-like sequence and $\{a_n\}\in \ell^2$ then the series
$\sum_{n=1}^\infty a_n\zeta_n$ converges {\em unconditionally} in
$\D^\times[t^\times]$.

If $\{\zeta_n\}$ is a Bessel-like sequence, then the operator
$F^\dag F$ (we keep for it the name of {\em frame operator}, as
usual) is a continuous linear map from $\D[t]$ into
$\D^\times[t^\times]$; i.e. $F^\dag F\in \LDD$. Clearly,
$$F^\dag F \eta= \sum_{k=1}^\infty \ip{\zeta_k}{\eta}\zeta_k$$ where the series on the right hand side converges in $\D^\times[t^\times]$. The operator $F^\dag F$ is positive, in the sense that $\ip{F^\dag F \eta}{\eta}\geq 0$, for every $\eta \in \D$.

\berem A sequence $\{\xi_n\}$ of elements of $\D$ can also be
considered as a sequence in $\D^\times$. Hence the notion of
Bessel-like sequence can be given also in this case, and analysis
and synthesis operators act in the very same way as before.
Moreover, if both series $\sum_{k=1}^\infty a_k \xi_k$ and
$\sum_{k=1}^\infty a_{\sigma(k)} \xi_{\sigma(k)}$ converge in
$\D[t]$, where $\sigma: {\mb N}\to {\mb N}$  is a bijection, then
they have the same sum, since $\sum_{k=1}^\infty a_k \xi_k$
converges unconditionally in $\D^\times[t^\times]$.

\enrem {
\begin{prop} \label{prop_211}A sequence $\{\zeta_n\}$ of elements of $\D^\times$ is Bessel-like if and only if, for every orthonormal basis $\{e_n\}$ in $\H$, there exists $W\in C(\H,\D^\times)$ such that $We_n=\zeta_n$, for every $n \in {\mb N}$.

\end{prop}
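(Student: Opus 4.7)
I would establish both directions via the interplay of the synthesis operator (from Proposition \ref{prop_210}) with the unitary identification $\H\simeq\ell^2$ induced by $\{e_n\}$. The ``if'' direction is constructive: compose the synthesis operator with that unitary. The ``only if'' direction exploits the Hilbert-space-valued adjoint of $W$ to rewrite the Bessel-like sum as a squared Hilbert norm.

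For the forward direction, assume $\{\zeta_n\}$ is Bessel-like. By Proposition \ref{prop_210} the synthesis map $F\ad:\ell^2\to\D^\times[t^\times]$, $\{a_n\}\mapsto\sum_n a_n\zeta_n$, is continuous. Given an orthonormal basis $\{e_n\}$ of $\H$, let $V:\H\to\ell^2$ be the unitary $V\xi=\{\ip{\xi}{e_n}\}$, so that $Ve_n$ is the $n$th canonical vector of $\ell^2$. Setting $W:=F\ad V:\H\to\D^\times$, we obtain $We_n=\zeta_n$ for every $n$; moreover $W$ is continuous and its restriction to $\D[t]$ lies in $\LDD$ (because $\D\hookrightarrow\H$ continuously), so $W\in C(\H,\D^\times)$.

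Conversely, fix an orthonormal basis $\{e_n\}$ of $\H$ and $W\in C(\H,\D^\times)$ with $We_n=\zeta_n$. By reflexivity of $\D[t]$ (Mackey--Arens), the adjoint $W\ad$ is continuous from $\D[t]$ into $\H[\|\cdot\|]$. For every $\eta\in\D$ and every $n$,
\[
\ip{W\ad\eta}{e_n}=\overline{\ip{We_n}{\eta}}=\overline{\ip{\zeta_n}{\eta}},
\]
so Parseval's identity yields $\sum_n|\ip{\zeta_n}{\eta}|^2=\|W\ad\eta\|^2$. If $\cM$ is a bounded subset of $\D[t]$, continuity of $W\ad$ implies $W\ad(\cM)\subset\H$ is bounded, and hence
\[
\sup_{\eta\in\cM}\sum_n|\ip{\zeta_n}{\eta}|^2=\sup_{\eta\in\cM}\|W\ad\eta\|^2<\infty,
\]
so $\{\zeta_n\}$ is Bessel-like.

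The delicate point is the converse's continuity claim $W\ad:\D[t]\to\H$: one must know that $W\ad\eta$ is a genuine element of $\H$, not merely of $\D^\times$, and that this restriction is norm-continuous. This is the dual version of the ``in particular'' remark recalled in Section \ref{sect_one} on the pairing of $C(\D,\H)$ with $C(\H,\D^\times)$ through the involution $X\mapsto X\ad$, guaranteed here by the reflexivity of $\D[t]$.
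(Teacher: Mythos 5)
Your proof is correct and takes essentially the same route as the paper: in the forward direction your $W=F\ad V$ is exactly the paper's map $f\mapsto\sum_k\ip{f}{e_k}\zeta_k$ (you simply reuse the continuity of the synthesis operator from the proof of Proposition \ref{prop_210} instead of re-deriving the estimate $\sup_{\eta\in\cM}|\ip{Wf}{\eta}|\leq\gamma_\cM^{1/2}\|f\|$), and in the converse you use the adjoint $W\ad$ and Parseval just as the paper does, being in fact more explicit than the paper about why $W\ad$ maps $\D[t]$ continuously into $\H$ (reflexivity of $\D[t]$, which is a standing assumption there).
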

\begin{proof} Let $\{\zeta_n\}$ be Bessel-like and $\{e_n\}$ an orthonormal basis for $\H$. For $f\in \H$, $f=\sum_{k=1}^\infty \ip{f}{e_k}e_k$, we define $Wf= \sum_{k=1}^\infty \ip{f}{e_k}\zeta_k$. This series converges in $\D^\times[t^\times]$ as seen in Proposition \ref{prop_210} and it is clear that $We_n=\zeta_n$, for every $n \in {\mb N}$.
We now prove that $W\in C(\H,\D^\times)$. Let us consider a bounded
subset $\cM$ of $\D[t]$; then,
\begin{align*} \sup_{\eta\in \cM}|\ip{Wf}{\eta}| & = \sup_{\eta\in \cM}\left|\ip{ \sum_{k=1}^\infty \ip{f}{e_k}\zeta_k}{\eta} \right|\\
&=\sup_{\eta\in \cM}\left|\sum_{k=1}^\infty \ip{f}{e_k}{\ip{\zeta_k}{\eta} }\right|\\
&\leq \left(\sum_{k=1}^\infty | \ip{f}{e_k}|^2\right)^{1/2} \sup_{\eta\in \cM} \left(\sum_{k=1}^\infty |\ip{\zeta_k}{\eta}|^2\right)^{1/2}\\
&\leq \gamma_\cM ^{1/2}\|f\|.
\end{align*}
Conversely, assume that, given an orthonormal basis $\{e_n\}$, there
exists $W\in C(\H,\D^\times)$ such that $We_n=\zeta_n$. Then, if
$\cM$ is a bounded subset of $\D[t]$,
\begin{align*}
\sup_{\eta\in \cM}\sum_{k=1}^\infty|\ip{\zeta_k}{\eta}|^2 &=
\sup_{\eta\in \cM}\sum_{k=1}^\infty|\ip{We_k}{\eta}|^2 \\ &=
\sup_{\eta\in
\cM}\sum_{k=1}^\infty\left|\ip{e_k}{W^\dag\eta}\right|^2 =
\sup_{\eta\in \cM}\|W^\dag\eta\|^2<\infty.
\end{align*}
Hence $\{\zeta_n\}$ is a Bessel-like sequence.
\end{proof}
}

As in the case of Hilbert spaces, Bessel-like sequences have a dual
counterpart. \bedefi Let $\{\xi_n\}$ be a sequence in $\D$. We say
that $\{\xi_n\}$ is a {\em Riesz-Fischer-like} sequence, if for
every orthonormal basis $\{e_n\}$ of $\H$, there exists $S\in
C(\D,\H)$ such that $S\xi_n=e_n$, for every $n \in {\mb N}$.
\findefi

For an arbitrary sequence $\{\xi_n\}$ in $\D$, we define,  a second
{\em analysis} operator $V$ as follows:

\begin{equation}\label{eq_OPU} \left\{\begin{array}{l} D(V)=\left\{\Phi \in \D^\times: \sum_{k=1}^\infty |\ip{\Phi}{\xi_k}|^2<\infty\right\} \\ V\Phi =\{\ip{\Phi}{\xi_k}\}, \; \Phi \in D(V)  \end{array} \right.\end{equation}

\begin{prop} If $\{\xi_n\}$ is a Riesz-Fischer-like sequence, then $V: D(V)\to \ell^2$ is surjective.
\end{prop}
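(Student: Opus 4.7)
My plan is to turn the surjectivity of $V$ into a direct computation with the dagger-adjoint of the operator $S$ supplied by the Riesz-Fischer-like hypothesis. Fix any orthonormal basis $\{e_n\}$ of $\H$ and, using the assumption, choose $S\in C(\D,\H)$ with $S\xi_n=e_n$ for every $n$. The crucial technical tool, already recorded in the paragraph that follows the definition of $C(\D,\H)$, is that whenever $X\in C(\D,\H)$ its adjoint $X^\dag\in\LDD$ admits a continuous extension (denoted by the same symbol) $X^\dag:\H\to\D^\times[t^\times]$. This is the one place in the argument where the stronger assumption $S\in C(\D,\H)$, rather than the weaker $S\in\LDD$, is really used.

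Given an arbitrary target $\{a_k\}\in\ell^2$, the plan is to realize it as the image under $V$ of $\Phi:=S^\dag g$, where $g$ is the unique element of $\H$ whose Fourier coefficients reproduce the $a_k$'s. Concretely, set $g:=\sum_{k=1}^\infty a_k e_k\in\H$, convergent by Parseval, and put $\Phi:=S^\dag g\in\D^\times$, which makes sense thanks to the extension noted above. Unfolding the duality pairing then gives, for every $k\in\mathbb{N}$,
$$\ip{\Phi}{\xi_k}=\ip{S^\dag g}{\xi_k}=\overline{\ip{S\xi_k}{g}}=\overline{\ip{e_k}{g}}=a_k,$$
using the defining identity $\ip{X^\dag\eta}{\xi}=\overline{\ip{X\xi}{\eta}}$, the relation $S\xi_k=e_k$, and the expansion of $g$ in the basis $\{e_n\}$. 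Hence $\sum_{k=1}^\infty|\ip{\Phi}{\xi_k}|^2=\|\{a_k\}\|_2^2<\infty$, so $\Phi\in D(V)$ and $V\Phi=\{a_k\}$, which is exactly the surjectivity claim.

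I do not foresee a substantive obstacle: once the continuous extension of $S^\dag$ to $\H$ is accepted, the argument is bookkeeping with the sesquilinear pairing and the orthonormality of $\{e_n\}$. The only mild point of care is to match the conjugation conventions for $\ip{\cdot}{\cdot}$ used throughout the paper with the choice $g=\sum a_k e_k$; if the opposite convention were in force, one would instead take $g=\sum \overline{a_k}\,e_k$, but this is purely cosmetic and does not affect the structure of the proof.
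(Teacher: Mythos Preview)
Your proof is correct and is essentially identical to the paper's: both fix an orthonormal basis $\{e_n\}$, take $S\in C(\D,\H)$ with $S\xi_n=e_n$, set $g=\sum_k a_k e_k\in\H$ and $\Phi=S^\dag g\in\D^\times$, and then compute $\ip{\Phi}{\xi_k}=a_k$. The only difference is presentational (the paper writes the chain $a_n=\ip{f}{e_n}=\ip{f}{S\xi_n}=\ip{S^\dag f}{\xi_n}$ directly, while you unfold the adjoint identity explicitly).
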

\begin{proof} Let $\{a_n\}\in \ell^2$ and $\{e_n\}$ an orthonormal basis of $\H$. Put $f= \sum_{k=1}^\infty a_k e_k\in \H$. Then,
$$ a_n= \ip{f}{e_n}=\ip{f}{S\xi_n}=\ip{S^\dag f}{\xi_n}, \quad \forall n \in {\mb N}.$$
Then $\Phi=S^\dag f \in D(V)$ and $V\Phi=\{a_n\}$.
\end{proof}

Let $\{\omega_n\}$ denote the canonical basis in $\ell^2$; i.e.,
$\omega_n=\{\delta_{kn}\}$, for every $n \in {\mb N}$. Then, for
every $n \in {\mb N}$, there exists $\zeta_n \in \D^\times$ (in
general, nonunique) such that $\delta_{kn}=\ip{\zeta_n}{\xi_k},
\quad n,k \in {\mb N}$.

The {\em duality} between Riesz-Fischer-like sequences and
Bessel-like ones is then stated by the following
\begin{prop} \label{prop_215} $\{\xi_n\}$ is a Riesz-Fischer-like sequence in $\D$ if and only if there exists a Bessel-like sequence $\{\zeta_n\}$ in $\D^\times$ such that $\{\xi_n\}$ and $\{\zeta_n\}$ are biorthogonal.
\end{prop}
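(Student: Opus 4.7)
The plan is to exploit the natural duality between the operator classes $C(\D,\H)$ and $C(\H,\D^\times)$: thanks to reflexivity of $\D[t]$, the dagger operation sends each of these classes onto the other, while Proposition~\ref{prop_211} identifies Bessel-like sequences in $\D^\times$ with those of the form $\{We_n\}$ for some $W\in C(\H,\D^\times)$ and some orthonormal basis $\{e_n\}$ of $\H$. Both implications then collapse to a one-line biorthogonality check.

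For ($\Rightarrow$), I would fix an orthonormal basis $\{e_n\}$ of $\H$, use the Riesz-Fischer-like hypothesis to obtain $S\in C(\D,\H)$ with $S\xi_n=e_n$, and define
$$\zeta_n:=S^\dag e_n\in\D^\times,\qquad n\in\mb N,$$
where $S^\dag\in C(\H,\D^\times)$ is the extended adjoint referred to in Section~\ref{sect_one}. Biorthogonality follows directly:
$$\ip{\zeta_n}{\xi_k}=\ip{S^\dag e_n}{\xi_k}=\overline{\ip{S\xi_k}{e_n}}=\overline{\ip{e_k}{e_n}}=\delta_{n,k}.$$
Since $S^\dag e_n=\zeta_n$ for every $n$ and $S^\dag\in C(\H,\D^\times)$, Proposition~\ref{prop_211} applied with $W=S^\dag$ yields that $\{\zeta_n\}$ is Bessel-like.

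For ($\Leftarrow$), given an arbitrary orthonormal basis $\{e_n\}$ of $\H$, I would apply Proposition~\ref{prop_211} to the assumed Bessel-like sequence $\{\zeta_n\}$ to produce $W\in C(\H,\D^\times)$ with $We_n=\zeta_n$ for every $n$. Setting $S:=W^\dag\in C(\D,\H)$ and reading off the Fourier coefficients of $S\xi_k$ against $\{e_n\}$,
$$\ip{S\xi_k}{e_n}=\ip{W^\dag\xi_k}{e_n}=\overline{\ip{We_n}{\xi_k}}=\overline{\ip{\zeta_n}{\xi_k}}=\delta_{n,k},$$
whence $S\xi_k=e_k$. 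As the basis was arbitrary, $\{\xi_n\}$ is Riesz-Fischer-like.

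The only point that deserves care is not a computation but a structural one: one has to be sure that the dagger operation genuinely sends $C(\D,\H)$ to $C(\H,\D^\times)$ \emph{and back}. This is exactly the input provided by the standing assumption of completeness and reflexivity of $\D[t]$, already used in Proposition~\ref{prop_210}: for $W:\H\to\D^\times[t^\times]$ continuous and $\xi\in\D$ fixed, the functional $\eta\mapsto\ip{W\eta}{\xi}$ is continuous on $\H$, so Riesz's theorem delivers $W^\dag\xi\in\H$, and the continuity $\D[t]\to\H[\|\cdot\|]$ of $W^\dag$ follows from standard locally convex duality in the reflexive setting.
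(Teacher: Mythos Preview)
Your proof is correct and follows essentially the same route as the paper's: both directions hinge on Proposition~\ref{prop_211} together with the fact that $\dag$ interchanges $C(\D,\H)$ and $C(\H,\D^\times)$, and the biorthogonality check is identical. The paper presents the two implications in the opposite order and is terser about invoking Proposition~\ref{prop_211}, but the argument is the same.
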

\begin{proof} Suppose that $\{\xi_n\}$ has a Bessel-like biorthogonal sequence. Then, for every orthonormal basis $\{e_n\}$ in $\H$, there exists $T \in C(\H, \D^\times)$ such that $Te_n=\zeta_n$, for every $n \in {\mb N}$. Then,
$$\delta_{kn}=\ip{\zeta_n}{\xi_k}=\ip{Te_n}{\xi_k}=\ip{e_n}{T^\dag\xi_k} , \quad n,k \in {\mb N}.$$
This easily implies that $T^\dag\xi_k= e_k$, for every $k\in {\mb
N}$.

Conversely, suppose that $\{\xi_n\}$ is a Riesz-Fischer-like
sequence. Then, for every orthonormal basis $\{e_n\}$ in $\H$, there
exists $S \in C(\D, \H)$ such that $S\xi_n=e_n$, for every $n \in
{\mb N}$. Hence,
$$\delta_{kn}=\ip{S\xi_n}{e_k}=\ip{\xi_n}{\S^\dag e_k}.$$
Let us define $\zeta_k=\S^\dag e_k$, $k \in {\mb N}$. Then
$\{\zeta_k\}$ is Bessel-like and $\{\xi_n\}$ and $\{\zeta_n\}$ are
biorthogonal.
\end{proof}

%Similarly, one can prove
{
In the opposite direction we only get a partial result.
\begin{prop} Let $\{\zeta_n\}$  be a sequence in $\D^\times$. If $\{\zeta_n\}$ possesses a biorthogonal sequence $\{\xi_n\}$ which is total and Riesz-Fischer-like, %sequence $\{\xi_n\}$ in $\D$ such that $\{\xi_n\}$ and $\{\zeta_n\}$ are biorthogonal.
then  $\{\zeta_n\}$ is a Bessel-like sequence.
\end{prop}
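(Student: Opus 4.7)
The plan is to exhibit, for every orthonormal basis $\{e_n\}$ of $\H$, a continuous operator $W\in C(\H,\D^\times)$ with $We_n=\zeta_n$; by Proposition~\ref{prop_211} this is equivalent to $\{\zeta_n\}$ being Bessel-like, so the result will follow. The natural candidate for $W$ is the adjoint of the operator furnished by the Riesz-Fischer-like property of $\{\xi_n\}$.

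First, I would fix an orthonormal basis $\{e_n\}$ of $\H$ and invoke the Riesz-Fischer-like hypothesis to obtain $S\in C(\D,\H)$ with $S\xi_n=e_n$ for all $n$. By the remark recalled in Section~2.1, the adjoint $S^\dag$, a priori an element of $\LDD$, admits a continuous extension $S^\dag\in C(\H,\D^\times)$. The key computation is then the biorthogonality check:
\[
\ip{S^\dag e_n}{\xi_k}=\overline{\ip{S\xi_k}{e_n}}=\overline{\ip{e_k}{e_n}}=\delta_{nk},\qquad n,k\in{\mb N}.
\]
Thus $\{S^\dag e_n\}$ is biorthogonal to $\{\xi_k\}$, exactly like $\{\zeta_n\}$.

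The next step is to conclude $S^\dag e_n=\zeta_n$ from the totality of $\{\xi_n\}$. For each fixed $n$, the functional $\Phi_n:=S^\dag e_n-\zeta_n\in\D^\times$ satisfies $\ip{\Phi_n}{\xi_k}=0$ for every $k$. Since $\Phi_n$ is $t$-continuous on $\D$ and $\{\xi_k\}$ is total in $\D[t]$, this forces $\Phi_n=0$ in $\D^\times$, i.e.\ $S^\dag e_n=\zeta_n$. Setting $W:=S^\dag$ yields the desired operator in $C(\H,\D^\times)$, and Proposition~\ref{prop_211} finishes the proof.

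The only place that requires some care is the extension of $S^\dag$ from $\D$ to all of $\H$ as a continuous map into $\D^\times[t^\times]$; this is precisely the content invoked right after the definition of $C(\E,\cF)$ in Section~2.1 and uses the reflexivity of $\D[t]$ that is in force throughout this subsection. Everything else is a direct duality computation, and no uniqueness of the biorthogonal sequence to $\{\xi_n\}$ is needed, because totality of $\{\xi_n\}$ pins down the unique dual sequence in $\D^\times$.
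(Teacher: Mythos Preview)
Your proof is correct and follows essentially the same route as the paper: obtain $S\in C(\D,\H)$ from the Riesz-Fischer-like hypothesis, note that $S^\dag\in C(\H,\D^\times)$, check that $\{S^\dag e_n\}$ is biorthogonal to $\{\xi_k\}$, use totality of $\{\xi_k\}$ to force $S^\dag e_n=\zeta_n$, and conclude via Proposition~\ref{prop_211}. The paper's argument is identical in substance, just slightly more terse.
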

\begin{proof} Since $\{\xi_n\}$ is Riesz-Fischer-like,  for every orthonormal basis $\{e_n\}$ in $\H$, there
exists $S \in C(\D, \H)$ such that $S\xi_n=e_n$, for every $n \in
{\mb N}$. Hence,
$$\delta_{kn}=\ip{S\xi_n}{e_k}=\ip{\xi_n}{\S^\dag e_k}.$$
This implies that $$ \ip{\xi_n}{\S^\dag e_k-\zeta_k}=0, \quad \forall k, n \in {\mb N}.$$
Since $\{\xi_k\}$ is total, we conclude that, for every $k \in {\mb N}$, $\zeta_k= S^\dag e_k$. Clearly, $S^\dag \in C(\H, \D^\times)$; hence, the statement follows from Proposition \ref{prop_211}.

\end{proof}
}
Let us now assume that $\{\xi_n\}$ is a Schauder basis for $\D[t]$
and that the dual basis $\{\zeta_n\}$ is a Bessel-like sequence.
Then, $\{\xi_n\}$ is a Riesz-Fischer-like sequence and  by
\eqref{eq_two}, we have, for every $\Phi \in \D^\times$ and for
every bounded subset $\cM$ of $\D[t]$,
\begin{align*}\sup_{f\in \cM}|\ip{\Phi}{f}| &\leq \left( \sum_{k=1}^\infty |\ip{\Phi}{\xi_k}|^2\right)^{1/2}\sup_{f\in \cM} \left( \sum_{k=1}^\infty |\ip{\zeta_k}{f}|^2\right)^{1/2}\\
&\leq \gamma_\cM^{1/2} \left( \sum_{k=1}^\infty
|\ip{\Phi}{\xi_k}|^2\right)^{1/2},
 \end{align*}
which, by putting $\Phi=f$ gives a lower estimate of $\left(
\sum_{k=1}^\infty |\ip{f}{\xi_k}|^2\right)^{1/2}$, similar to that
one gets in the usual formulation in Hilbert spaces.

\section{Riesz-like bases}\label{sect_weak Riesz bases}
\subsection{Basic properties}

\bedefi \label{def_GRBW}A Schauder basis $\{\xi_n\}$ for $\D[t]$ is
called a {\em Riesz-like basis} if there exists a one-to-one
operator $T\in C(\D,\H)$  such that $\{T\xi_n\}$ is an orthonormal
basis for $\H$. \findefi

It is easy to see that every Riesz-like basis is a Riesz-Fisher-like
sequence.

Since $T$ maps $\D[t]$ into $\H[\|\cdot\|]$ continuously,
$T^\dagger$ has a {continuous} extension (which we denote by the
same symbol) from $\H[\|\cdot\|]$ into $\D^\times[t^\times]$. The
range $R(T)$ of $T$  is dense in $\H$ since it contains the
orthonormal basis $\{e_k\}$ with $e_k:= T \xi_k$, $k\in {\mb N}$. In
particular, it may happen that $R(T)=\H$. Hence, the operator
$T^{-1}$ is everywhere defined and it is continuous if, and only if
$R(T^\dag)= \D^\times$. We will name $\{\xi_n\}$ a {\em strict}
Riesz-like basis, in this case. As we shall see in Theorem \ref{thm:
equivalence g R b}, this imposes severe constraints on the topology
$t$ of $\D$.

If $\{\xi_n\}$ is a Riesz-like basis, we can find explicitly the
sequence $\{\zeta_n\} \subset \D^\times$ of Proposition
\ref{prop_2.4.r}. The continuity of $T$ and \eqref{eq three}, in
fact, imply
$$ Tf= \sum_{n=1}^\infty \overline{\ip{\zeta_n}{f}}T\xi_n = \sum_{n=1}^\infty \overline{\ip{\zeta_n}{f}}e_n, \quad \forall f \in \D.$$
This, in turn, implies that
$\overline{\ip{\zeta_n}{f}}=\ip{Tf}{e_n}$, for every $f \in \D.$
Hence $\zeta_n=T^\dagger e_n$, for every $n \in {\mb N}$.

Clearly, for every $n,k\in \mathbb{N}$,
 $$\ip{\zeta_k}{\xi_n} = \ip{T^\dagger e_k}{\xi_n}=\ip{ e_k}{T\xi_n}=\ip{e_k}{e_n}= \delta_{k,n},
$$
and $T^\dagger T\xi_n= \zeta_n$, for every $n \in {\mb N}$. \\

\medskip
{\noindent Moreover, $\{\zeta_n\}$ is a Bessel-like sequence
(Proposition \ref{prop_215}). Indeed, one has, for every bounded
subset $\cM$ of $\D[t]$,
\begin{align*} \sup_{f\in \cM}\sum_{n=1}^\infty |{\ip{\zeta_n}{f}}|^2 &=\sup_{f\in \cM}\sum_{n=1}^\infty \left|{\ip{T^\dagger e_n}{f}}\right|^2\\ &= \sup_{f\in \cM}\sum_{n=1}^\infty |{\ip{e_n}{Tf}}|^2= \sup_{f\in \cM}\|Tf\|^2<\infty.\end{align*}}

\medskip
\noindent An easy computation shows that
\begin{equation} \label{eq_adjoint}T^\dagger g= \sum_{k=1}^\infty d_k \zeta_k \quad \mbox{if} \quad g= \sum_{n=1}^\infty d_n e_n \in \H.\end{equation}

\medskip
\noindent Finally, we have
\begin{equation} \label{eq_range}
T^\dagger(\H)=\left\{\Psi \in \D^\times: \sum_{k=1}^\infty
|\ip{\Psi}{\xi_k} |^2 <\infty\right\},
 \end{equation}
 that is, $T^\dagger(\H)=D(V)$, where $V$ is the operator defined in \eqref{eq_OPU}.

 Indeed, if $\Psi\in T^\dagger(\H)$, then $\Psi= T^\dagger h$, for some $h\in \H$. Let $h=\sum_{k=1}^\infty c_ke_k$. Then, using the continuity of $T^\dagger$,
$$T^\dagger h = T^\dagger\left(\sum_{k=1}^\infty c_ke_k \right) = \sum_{k=1}^\infty c_k T^\dagger e_k  = \sum_{k=1}^\infty c_k\zeta_k .$$
This implies that $c_k= \ip{\Psi}{\xi_k}$ and $\sum_{k=1}^\infty |\ip{\Psi}{\xi_k} |^2 <\infty.$\\
Conversely, let $\sum_{k=1}^\infty \ip{\Psi}{\xi_k}\zeta_k \in
\D^\times$ with $\sum_{k=1}^\infty |\ip{\Psi}{\xi_k} |^2 <\infty.$
Define $h=\sum_{k=1}^\infty \ip{\Psi}{\xi_k} e_k\in \H$. Then
$$ T^\dagger h = \sum_{k=1}^\infty \ip{\Psi}{\xi_k} T^\dagger e_k = \sum_{k=1}^\infty \ip{\Psi}{\xi_k} \zeta_k.$$

\medskip
\noindent The operator $T$ can also be regarded as an Hilbertian
operator (by assumption it maps $\D$ into $\H$).   This operator is
closable in $\H$ if, and only if, the subspace
$$D(T^*)=\{ g\in  \H: T^\dagger g \in \H\}$$ is dense in $\H$. In this case, the operator $T^*$, the adjoint of $T$, is defined by $T^* g= T^\dagger g$, $g\in D(T^*)$.

{\berem If $\{\xi_n\}$ is a Riesz-like basis for $\D[t]$  and
$\{c_n\}\in \ell^2$ with $ \sum_{k=1}^\infty c_k \xi_k=0$, then
$c_n=0$, for every $n \in {\mb N}$.\enrem}

\begin{thm}\label{thm_34} Let $\D[t]$ be complete and reflexive and $\D^\times$ quasi-complete. Let $\{\xi_n\}$ be a {topological basis of} $\D$. The following statements are equivalent.
\begin{itemize}
\item[(i)] $\{\xi_n\}$ is a Riesz-like basis.
\item[(ii)] There exists a unique sequence $\{\zeta_n\}\subset \D^\times$ such that
\begin{itemize}
\item[(ii.a)] $\{\xi_n\}$ and $\{\zeta_n\}$ are biorthogonal;
\item[(ii.b)] for every $f \in \D$,
$\sum_{n=1}^\infty|\ip{\zeta_n}{f}|^2<\infty$;
\item[(ii.c)] the seminorm $p_\zeta$ defined by
$$ p_\zeta(f)= \left( \sum_{k=1}^\infty|\ip{\zeta_n}{f}|^2\right)^{1/2}$$ is continuous on $\D[t]$.
\end{itemize}
\item[(iii)] There exists $S \in \LDD{}$, $S\geq 0$, such that $\{\xi_n\}$ and $\{S\xi_n\}$ are biorthogonal.
\end{itemize}

\end{thm}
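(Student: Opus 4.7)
\medskip

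The plan is to prove the cycle (i) $\Rightarrow$ (ii) $\Rightarrow$ (iii) $\Rightarrow$ (i), exploiting the candidate correspondences $\zeta_n = T^\dagger e_n$ between (i) and (ii), $S = F^\dagger F$ between (ii) and (iii), and $Tf = \sum_n \overline{\ip{\zeta_n}{f}} e_n$ to close the loop.

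\smallskip

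For (i) $\Rightarrow$ (ii), the Riesz-like basis $\{\xi_n\}$ is in particular a Schauder basis for $\D[t]$, so it admits a unique biorthogonal sequence in $\D^\times$. The discussion preceding the theorem identifies this sequence as $\zeta_n = T^\dagger e_n$, where $\{e_n\} = \{T\xi_n\}$ is the orthonormal basis obtained from $T$. Then for every $f\in\D$,
\begin{equation*}
\sum_{n=1}^\infty |\ip{\zeta_n}{f}|^2 = \sum_{n=1}^\infty |\ip{T^\dagger e_n}{f}|^2 = \sum_{n=1}^\infty |\ip{e_n}{Tf}|^2 = \|Tf\|^2,
\end{equation*}
so that (ii.b) holds and $p_\zeta(f) = \|Tf\|$. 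Continuity of $T: \D[t]\to\H[\|\cdot\|]$ then yields (ii.c).

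\smallskip

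For (ii) $\Rightarrow$ (iii), property (ii.c) says exactly that the analysis operator $F: f\in \D[t]\mapsto \{\overline{\ip{\zeta_n}{f}}\}\in \ell^2$ is continuous, and Proposition \ref{prop_210} then ensures $\{\zeta_n\}$ is Bessel-like. Using the synthesis operator $F^\dagger$, which is well defined on $\ell^2$ by the quasi-completeness of $\D^\times[t^\times]$, one sets
\begin{equation*}
S := F^\dagger F, \qquad Sf = \sum_{n=1}^\infty \overline{\ip{\zeta_n}{f}}\,\zeta_n \in \D^\times.
\end{equation*}
Then $S\in\LDD$ by the discussion after Proposition \ref{prop_210}, biorthogonality gives $S\xi_k = \zeta_k$, and positivity follows from $\ip{Sf}{f} = \sum_n |\ip{\zeta_n}{f}|^2 \geq 0$.

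\smallskip

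The step I expect to be the main obstacle is (iii) $\Rightarrow$ (i), since $S$ is given only as an abstract continuous positive map and we need to manufacture a Hilbert-space target. The key trick is to bypass any square-root construction and recover a Bessel-like property for $\zeta_n := S\xi_n$ directly from the topology. First, since $\{\xi_n\}$ is a topological basis with biorthogonal sequence $\{\zeta_n\}$, it is Schauder (by the characterization of minimal topological bases), so $f = \sum_n \overline{\ip{\zeta_n}{f}}\xi_n$ in $\D[t]$; applying the continuous map $S$ and pairing with $f$ termwise gives
\begin{equation*}
\ip{Sf}{f} = \sum_{n=1}^\infty |\ip{\zeta_n}{f}|^2, \qquad f\in\D.
\end{equation*}
Next, for any bounded $\cM\subset \D[t]$ the set $S(\cM)$ is bounded in the strong dual $\D^\times[t^\times]$, so it is uniformly bounded on $\cM$ itself: $\sup_{f\in\cM}|\ip{Sf}{f}|<\infty$. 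Hence $\{\zeta_n\}$ is Bessel-like. Fixing any orthonormal basis $\{e_n\}$ of $\H$, define
\begin{equation*}
Tf := \sum_{n=1}^\infty \overline{\ip{\zeta_n}{f}}\,e_n, \qquad f\in\D.
\end{equation*}
This is the analysis operator of $\{\zeta_n\}$ composed with the canonical unitary $\ell^2\to\H$, hence $T\in C(\D,\H)$. By biorthogonality $T\xi_k = e_k$, so $\{T\xi_n\}$ is an orthonormal basis of $\H$; and $Tf=0$ forces all expansion coefficients of $f$ to vanish, so $T$ is injective, completing the cycle.
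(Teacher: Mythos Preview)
Your proof is correct. The step (i)\,$\Rightarrow$\,(ii) matches the paper exactly. The other two steps reach the same conclusions by somewhat different routes, and the differences are worth recording.

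For (ii)\,$\Rightarrow$\,(iii), the paper defines $S$ first on the algebraic span $\D_0$ of the $\xi_n$'s by $S\xi_k=\zeta_k$, checks the estimate $|\ip{Sf}{g}|\le p_\zeta(f)p_\zeta(g)$, and then extends to $\D$ by density and quasi-completeness of $\D^\times$. Your route---recognize $\{\zeta_n\}$ as Bessel-like via (ii.c) and Proposition~\ref{prop_210}, then take $S=F^\dagger F$---is the same operator but obtained without the extension step, since the machinery after Proposition~\ref{prop_210} already gives $F^\dagger F\in\LDD$.

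For (iii)\,$\Rightarrow$\,(i), the paper writes $f=\sum a_k\xi_k$, establishes $\ip{Sf}{f}=\sum|a_k|^2$, defines the partial operators $T_Nf=\sum_{k\le N}a_ke_k$, observes each $T_N\in C(\D,\H)$, and invokes the Banach--Steinhaus theorem (using barreledness of $\D[t]$) to conclude $T\in C(\D,\H)$. You instead argue that $S$, being continuous, carries bounded sets to bounded subsets of $\D^\times[t^\times]$, whence $\sup_{f\in\cM}\ip{Sf}{f}<\infty$ for every bounded $\cM$; this is precisely the Bessel-like condition for $\{\zeta_n\}$, and Proposition~\ref{prop_210} then supplies the continuity of the analysis operator, hence of $T$. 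Both arguments ultimately rest on reflexivity (barreledness for Banach--Steinhaus; duality of adjoints inside Proposition~\ref{prop_210}), so neither is more elementary, but yours recycles more of the paper's preliminary results while the paper's version is self-contained at that point.
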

\begin{proof} $({\rm i}) \Rightarrow ({\rm ii})$: Let $\{\xi_n\}$ be a Riesz-like basis for $\D$. Then there exists $T\in C(\D, \H)$ such that $\{T\xi_n\}$ is an orthonormal basis for $\H$. Put $e_n=T\xi_n$ and $\zeta_n= T^\dagger e_n$. Then,
$$\ip{\zeta_n}{\xi_k}= \ip{T^\dagger e_n}{\xi_k} = \ip{ e_n}{T\xi_k}=\ip{ e_n}{e_k}=\delta_{n,k},\,\,\, n,k\in\mathbb{N}.$$
It is easily seen that, if $f = \sum_{n=1}^\infty a_n \xi_n$, then
$a_n= \overline{\ip{\zeta_n}{f}}$ and $Tf= \sum_{n=1}^\infty
\overline{\ip{\zeta_n}{f}} e_n$. Hence
$\sum_{n=1}^\infty|\ip{\zeta_n}{f}|^2<\infty$. Moreover, since $T\in
C(\D,\H)$, there exists a continuous seminorm $p$ on $\D[t]$ such
that $\|Tf\|\leq p(f)$, for every $f \in \D$. Hence,
$$p_\zeta(f):=\left( \sum_{k=1}^\infty|\ip{\zeta_n}{f}|^2\right)^{1/2}= \|Tf\|\leq p(f), \quad \forall f \in \D.$$
This implies that $p_\zeta$, which is a seminorm on $\D$, is
continuous.

{$({\rm ii}) \Rightarrow ({\rm iii})$: First, let us define
$S\xi_k= \zeta_k$ and extend $S$ by linearity to \mbox{$\D_0:={\rm
span}\{\xi_m;m\in{\mb N}\}$}. Thus $S:\D_0 \to \D^\times$. If
$f=\sum_{k=1}^n \overline{\ip{\zeta_k}{f}}\xi_k \in \D_0$ and $g=
\sum_{h=1}^\infty \overline{\ip{\zeta_h}{f}}\xi_h \in \D$, we get
\begin{align*} |\ip{Sf}{g}|&= \left|\sum_{k=1}^\infty \overline{\ip{\zeta_k}{f}}\ip{\zeta_k}{g}\right|\\ &\leq  \left(\sum_{n=1}^\infty |\ip{\zeta_n}{f}|^2 \right)^{1/2}\left(\sum_{n=1}^\infty |\ip{\zeta_n}{g}|^2 \right)^{1/2}= p_\zeta(f)p_\zeta(g).\end{align*}
Hence, if $\cM$ is a bounded subset of $\D[t]$, we obtain
$$ \sup_{g\in \cM}|\ip{Sf}{g}|\leq p_\zeta(f)\sup_{g\in \cM}p_\zeta(g).$$
This proves that $S$ is continuous from $\D_0[t]$ into
$\D^\times[t^\times]$.
 Thus { $S$} has an extension (denoted by the same symbol) to a continuous
linear map from the quasi-completion of $\D_0[t]$, which is $\D$, to
the quasi-completion of $\D^\times$, which coincides with
$\D^\times$. Hence, $S \in \LDD$. It is easily seen that
$\ip{Sf}{f}\geq 0$, for every $f\in \D$. }

$({\rm iii}) \Rightarrow ({\rm i})$:
{Since $\{\xi_n\}$ is a topological basis, every $f \in \D$ can be represented, in unique way, as  $f=\sum_{k=1}^\infty a_k\xi_k $}. Let now $S \in \LDD$
be such that $\{\xi_n\}$ and $\{S\xi_n\}$ are biorthogonal. Then the
following equality holds
\begin{equation}\label{eqn_positivity} \ip{Sf}{f}= \sum_{k=1}^\infty |a_k|^2 \; \mbox{ if } f=\sum_{k=1}^\infty a_k\xi_k \in \D. \end{equation}
{This implies that $S\geq 0$ and $\{a_k\}\in \ell^2$. Then, if
$\{e_k\}$ is any orthonormal basis in $\H$ the series
$\sum_{k=1}^\infty a_ke_k$ converges in $\H$. Let us fix one of
these bases $\{e_n\}$.
 %and consider the vector $f'\in \H$ defined by $f'=\sum_{k=1}^\infty a_ke_k $. %{Then, as it is readily checked, $f'$ does not depend on the particular representation of $f$ with respect to $\{\xi_n\}$.}
We define
$$T_n: f= \sum_{k=1}^\infty a_k\xi_k \in \D \to T_nf= \sum_{k=1}^n a_ke_k \in \H$$
and
$$T: f= \sum_{k=1}^\infty a_k\xi_k \in \D \to Tf= \sum_{k=1}^\infty a_ke_k \in \H.$$
{Using \eqref{eqn_positivity} it is easily seen that $T_n\in C(\D,\H)$.  Clearly, $T_nf\to Tf$ in $\H$. }Since $\D[t]$ is reflexive, it is barreled and then,
by the Banach - Steinhaus theorem (see, e.g. \cite[Th.
11.1.3]{jarchow}), it follows that $T\in C(\D,\H)$. Moreover if $f=
\sum_{k=1}^\infty a_k\xi_k \in \D$, then $\|Tf\|^2 =
\sum_{k=1}^\infty |a_k|^2$ whence it follows immediately that $T$ is
injective. By the definition itself, $T\xi_k=e_k$. Therefore
$\{\xi_n\}$ is a Riesz-like basis.}
 \end{proof}

\beex Suppose that $\{e_n\}$ is an orthonormal basis for $\H$ whose
elements belong to $\D$. If $\{e_n\}$ is also a basis for $\D[t]$,
then it is automatically a Schauder basis and since the identity is
continuous from $\D[t]$ into $\H[\|\cdot\|]$, it is clear that
$\{e_n\}$ is a Riesz-like basis for $\D[t]$. The dual sequence in
$\D^\times$ is clearly $\{e_n\}$ itself. This is a familiar
situation. Let us consider, in fact, the triplet ${\mc S}({\mb R})
\subset L^2({\mb R}) \subset {\mc S}^\times({\mb R})$, where ${\mc
S}({\mb R})$ is the Schwartz space of rapidly decreasing
$C^\infty$-functions on the real line and ${\mc S}^\times({\mb R})$
the space of (conjugate) tempered distributions. Then, it is well
known that the set $\{\phi_n\}$ of Hermite functions is not only an
orthonormal basis for $L^2({\mb R})$, but also a basis for $\S({\mb
R})$ in its own topology (see \cite[Theorem V.13]{reedsimon}). \enex

\beex Let $\H$ be a separable Hilbert space and $\{e_n\}$ an
orthonormal basis of $\H$. Let $N$ denote the {\em number} operator
defined on the basis vectors by $N e_k= k e_k$, $k \in {\mb N}$.
Then as it is well-known $N$ is self-adjoint on its natural domain
$$D(N)= \left\{ f \in \H:  \sum_{k=1}^\infty k^2|\ip{f}{e_k}|^2<\infty \right\}.$$
Let $\D:= \D^\infty (N)= \bigcap_{k=1}^\infty D(N^k)$ be endowed
with the topology $t_N$ defined by the seminorms $p_k (\cdot)= \|N^k
\cdot\|$, $k=0, 1,2, \ldots$. Then $\D$ is a Fr\'echet and reflexive
space. Define $\xi_k = \frac{1}{k}e_k, \, k \in {\mb N}$. Clearly,
$N\xi_k = e_k$, for every $k \in {\mb N}$ and $N$ maps continuously
$\D[t_N]$ into $\H$. Moreover, $\{\xi_k\}$ is a basis for $\D[t_N]$.
Indeed, for every $p \in {\mb N}$ we have
\begin{align*} \left\|N^p\left(f- \sum_{k=1}^nk\ip{f}{e_k}\xi_k\right) \right\|&=\left\|N^p  f- \sum_{k=1}^nk\ip{f}{e_k}N^p\xi_k \right\| \\ =\left\|N^p  f- \sum_{k=1}^nk^p\ip{f}{e_k}e_k \right\|\end{align*}
and the latter tends obviously to zero as $n \to \infty$. Since
$\{\xi_k\}$ is  a Schauder basis, it is a Riesz-like basis for
$\D[t_N]$.

\enex \beex Let $\{\xi_n\}$ be a Schauder basis for $\D[t]$. Assume
that there exists a continuous seminorm $p$ on $\D[t]$ such that
$$ \left(\sum_{k=1}^\infty |c_k|^2 \right)^{1/2} \leq p\left(\sum_{k=1}^\infty c_k \xi_k \right),$$
whenever $\sum_{k=1}^\infty c_k \xi_k$ converges in $\D[t]$.

Let $\{e_k\}$ be any orthonormal basis in $\H$. Then the operator
$$ T:f=\sum_{k=1}^\infty c_k \xi_k \to Tf= \sum_{k=1}^\infty c_k e_k$$ is one-to-one and continuous from $\D[t]$ into $\H[\|\cdot\|]$. Clearly $T\xi_k=e_k$, for every $k \in {\mb N}$. Hence, $\{\xi_n\}$ is a Riesz-like basis for $\D[t]$.

\enex

\beex Let $\{\xi_n\}$ be a Schauder basis for $\D[t]$ and $\{\zeta_n
\}$ the corresponding sequence in $\D^\times$ such that
$\ip{\xi_n}{\zeta_m}=\delta_{nm}$. Define $S\xi_n =\zeta_n$, $n \in
{\mb N}$, and assume that $S$ extends to a  positive operator
(denoted by the same symbol) of $\LDD$. If $S= T^\dagger T$, with
$T\in C(\D,\H)$ and $T^\dagger \in C(\H, \D^\times)$, then, as it is
easily seen, the sequence $\{e_n\}$ with $e_n= T\xi_n$ is
orthonormal and, if $T$ is surjective, it is an orthonormal basis
for $\H$. Thus $\{\xi_n\}$ is a Riesz-like basis for $\D[t]$. \enex

\berem It is worth considering the case where, so to say, the rigged
Hilbert space collapses into one Hilbert space only, as it happens
if the topology $t$ of $\D$ is equivalent to the Hilbert norm. Then
$\{\xi_n\}$ is Riesz-like if there exists an invertible bounded
operator $T$ mapping $\{\xi_n\}$ into an orthonormal basis of $\H$.
However, the inverse $T^{-1}$ need not be bounded. Nevertheless the
discussion made so far shows that the essential features of (usual)
Riesz bases in Hilbert space are preserved also in this more general
set-up. \enrem

In the usual definition of Riesz basis in Hilbert space $\H$ one
requires that $\{\xi_n\}$ is mapped into an orthonormal basis of
$\H$ by a bounded operator with bounded inverse. In Definition
\ref{def_GRBW}, we only required the continuity  of the operator
$T$; i.e $T\in C(\D, \H)$. In fact, there is no room for the
continuity of $T^{-1}$ from $\H$ into $\D[t]$, unless $\D[t]$ (and,
then also $\D^\times[t^\times]$) is {\em equivalent} (in topological
sense) to a Hilbert space. We maintain the basic assumption that
$\D[t]$ is complete and reflexive.

\begin{thm}\label{thm: equivalence g R b} Let  $\{\xi_n\}$ be  a sequence of elements of $\D$. The following statements are equivalent.
\begin{itemize}
\item[(i)] $\{\xi_n\}$ is a Riesz-like basis and the one-to-one operator $T\in C(\D,\H)$ for which $\{T\xi_n\}$ is an orthonormal basis of $\H$, has a continuous inverse; i.e. $T^{-1}\in C(\H, \D)$.
\item[(ii)] The space $\D$ can be endowed with an inner product $\ip{\cdot}{\cdot}_{+1}$ such that the topology induced by the corresponding norm $\|\cdot\|_{+1}$ is equivalent to $t$,  $\D[\|\cdot\|_{+1}]$ is a Hilbert space and  the  sequence $\{\xi_n\}$ is an orthonormal basis for $\D[\|\cdot\|_{+1}]$.
\item[(iii)]  The  sequence $\{\xi_n\}$ is complete in $\D[t]$ and there exists a continuous seminorm $p$ such that for every $n \in {\mb N}$ and complex numbers $\{c_1, \ldots, c_n\}$
$$ \sum_{i=1}^n |c_i|^2 \leq p\left( \sum_{i=1}^nc_i \xi_i \right)^2$$ and for every continuous seminorm $q$ there exists $C_q>0$ such that
$$q\left( \sum_{i=1}^nc_i \xi_i \right)^2\leq C_q \sum_{i=1}^n |c_i|^2,$$
for every $n \in {\mb N}$ and complex numbers $\{c_1, \ldots,
c_n\}$.

\end{itemize}
\end{thm}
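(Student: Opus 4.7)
The plan is to prove the circle (i) $\Rightarrow$ (ii) $\Rightarrow$ (iii) $\Rightarrow$ (i). The strategy is that (ii) is the natural bridge: it says the Hilbert structure present on $\H$ can be transported back to $\D$.

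For (i) $\Rightarrow$ (ii), I would define $\ip{f}{g}_{+1}:=\ip{Tf}{Tg}$ on $\D\times\D$. Since $T$ is one-to-one, this is genuinely a scalar product, and by construction $\ip{\xi_n}{\xi_m}_{+1}=\ip{e_n}{e_m}=\delta_{n,m}$. Continuity of $T$ from $\D[t]$ into $\H$ gives $\|\cdot\|_{+1}\le p$ for some continuous seminorm $p$, so the $\|\cdot\|_{+1}$-topology is coarser than $t$; continuity of $T^{-1}$ gives the reverse. Completeness of $\D[t]$ then forces $\D[\|\cdot\|_{+1}]$ to be a Hilbert space, and since $T$ is now an isometric bijection onto $\H$ that sends $\{\xi_n\}$ to an orthonormal basis, $\{\xi_n\}$ is an orthonormal basis of $\D[\|\cdot\|_{+1}]$.

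For (ii) $\Rightarrow$ (iii), completeness of $\{\xi_n\}$ is immediate. Parseval's identity in $\D[\|\cdot\|_{+1}]$ gives $\|\sum c_i\xi_i\|_{+1}^2=\sum|c_i|^2$. Taking $p=\|\cdot\|_{+1}$, which is a continuous seminorm by (ii), yields the lower bound with equality. For any other continuous seminorm $q$, equivalence of topologies furnishes $C_q>0$ with $q(\cdot)^2\le C_q\|\cdot\|_{+1}^2$, which is exactly the upper bound.

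The substantive implication is (iii) $\Rightarrow$ (i). First I would verify that $\{\xi_n\}$ is actually a Schauder basis by checking the criterion of Proposition \ref{prop_3.2}: for any continuous seminorm $q$, pick $C_q$ from the upper bound; then $q(\sum_{i=1}^n c_i\xi_i)^2\le C_q\sum_{i=1}^n|c_i|^2\le C_q\sum_{i=1}^{n+m}|c_i|^2\le C_q\,p(\sum_{i=1}^{n+m}c_i\xi_i)^2$, where $p$ is the distinguished seminorm in the lower bound. Combined with completeness, this yields a unique expansion $f=\sum c_i(f)\xi_i$ for every $f\in\D$. Fix an orthonormal basis $\{e_n\}$ in $\H$ and set $Tf:=\sum c_i(f)e_i$. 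Passing to the limit in the lower bound shows $\{c_i(f)\}\in\ell^2$ with $\sum|c_i(f)|^2\le p(f)^2$, so $T$ is well-defined, continuous into $\H$, and injective. Surjectivity, hence continuity of $T^{-1}$, is the step that requires genuine work: given $\{c_i\}\in\ell^2$, I would use the upper bound applied to any continuous seminorm $q$ to show that the partial sums $\sum_{i=1}^n c_i\xi_i$ form a Cauchy net in $\D[t]$; the assumed completeness of $\D[t]$ then supplies a limit $f\in\D$ with $Tf=\sum c_ie_i$, and taking limits in the upper bound gives $q(T^{-1}g)\le\sqrt{C_q}\,\|g\|$ for every $g\in\H$. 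The main obstacle I anticipate is precisely this surjectivity/Cauchy step, because the upper bound controls only \emph{finite} partial sums and one must carefully justify passage to the limit and identify the limit with an element of $\D$ admitting the prescribed expansion.
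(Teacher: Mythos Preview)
Your proof is correct and follows essentially the same circle $(\mathrm{i})\Rightarrow(\mathrm{ii})\Rightarrow(\mathrm{iii})\Rightarrow(\mathrm{i})$ as the paper, with the same inner product $\ip{f}{g}_{+1}=\ip{Tf}{Tg}$ and the same use of Parseval to translate between $\|\cdot\|_{+1}$ and $\sum|c_i|^2$. The only cosmetic difference is in $(\mathrm{iii})\Rightarrow(\mathrm{i})$: you first invoke Proposition~\ref{prop_3.2} to secure the Schauder basis property and then build $T$ from the expansion coefficients, whereas the paper defines $T$ and its would-be inverse $S$ directly on finite linear combinations, checks continuity via the two inequalities in $(\mathrm{iii})$, and extends by density and completeness---the Schauder property then falls out of the resulting topological isomorphism.
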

\begin{proof}$(i)\Rightarrow(ii)$:
 Let $T$ be the continuous  operator with continuous inverse such that $\{T\xi_n\}$ is an orthonormal basis of $\H$ and define
$$\ip{\xi}{\eta}_{+1}:=\ip{T\xi}{T\eta}, \qquad\forall\xi,\eta\in\D.$$ Then, $\ip{\cdot}{\cdot}_{+1}$ is an inner product on $\D$. Let $\|\cdot\|_{+1}$ be the corresponding norm. Clearly, $\|\xi\|_{+1}=\|T\xi\|$, for every $\xi \in \D$. Since $T$ is continuous from $\D[t]$ to $\H$,  there exists a continuous seminorm $p$ such that
\begin{equation}\label{norm +1 and a seminorm}\|\xi\|_{+1}=\|T\xi\|\leq p(\xi),\qquad \forall \xi\in\D.\end{equation}
On the other hand, $T^{-1}$ is continuous from $\H$ onto $\D[t]$,
then  for every seminorm $q$ on $\D$ there exists $\gamma_q>0$ such
that
\begin{equation*}
  q(T^{-1}\zeta)\leq\gamma_q\|\zeta\|, \quad\forall \zeta\in\H.
\end{equation*}
If $\xi\in\D$, then $\xi=T^{-1}\zeta$ for some $\zeta\in\H$, hence
\begin{equation}\label{norm +1 and every seminorm}
  q(\xi)\leq q(T^{-1}\zeta)\leq\gamma_q\|T\xi\|=\gamma_q\|\xi\|_{+1}.\end{equation}

The equivalence of the topology defined by $\|\cdot\|_{+1}$ and $t$
implies that $\D[\|\cdot\|_{+1}]$ is a Hilbert space.

Finally, the sequence $\{\xi_n\}$ is basis consisting of orthonormal
vectors in $\D[\|\cdot\|_{+1}]$; indeed,
\begin{equation}\label{eqn_orthbases}\ip{\xi_i}{\xi_j}_{+1}=\ip{T\xi_i}{T\xi_j}=\ip{e_i}{e_j}=\delta_{ij},\,\,\,\,i,j\in\mathbb{N}.\end{equation}
$(ii)\Rightarrow(iii)$:  Since $\|\cdot\|_{+1}$ defines a topology
equivalent to $t$, then there exists a continuous seminorm $p$ on
$\D[t]$ such that \eqref{norm +1 and a seminorm} holds and for every
continuous seminorm $q$ there exists $\gamma_q>0$ such that
\eqref{norm +1 and every seminorm} holds.

Now, consider any fixed $n \in {\mb N}$ and complex numbers $\{c_1,
\ldots, c_n\}$ and consider the orthonormal basis $\{\xi_n\}$ for
$\D[\|\cdot\|_{+1}]$. If  $\xi=\sum_{i=1}^nc_i \xi_i\in\D$, then
$\|\xi\|_{+1}^2=\sum_{i=1}^n |c_i|^2$
 then the statement follows by applying \eqref{norm +1 and a seminorm} and \eqref{norm +1 and every seminorm} to $\xi$.
Of course the linear span of $\{\xi_n\}$ is dense in $\D[\|\cdot\|_{+1}]$, since $\{\xi_n\}$ is an orthonormal basis for $\D[\|\cdot\|_{+1}]$; hence the sequence $\{\xi_n\}$ is complete in $\D[\|\cdot\|_{+1}]$ and then, by the equivalence of $t$ and of the topology generated by $\|\cdot\|_{+1}$, $\{\xi_n\}$ is complete in $\D[t]$.\\
$(iii)\Rightarrow(i)$: Let $\{e_n\}$ be any orthonormal basis for
$\H$ and define two linear operators $T:\D\to\H$ and $S:\H\to\D$ as
follows: for any fixed $n\in\mathbb{N}$
$T\left(\sum_{i=1}^nc_i\xi_i\right):=\sum_{i=1}^nc_i e_i$ and
$S\left(\sum_{i=1}^n c_i e_i\right):=\sum_{i=1}^nc_i\xi_i$ with
$c_i\in\mathbb{C}$; $T$ and $S$ are continuous;  moreover,
$T\xi_n=e_n$ and $Se_n=\xi_n$, for every $n\in\mathbb{N}$. Certainly
$TS=I$ and, since $\{\xi_n\}$ is complete  in $\D[\|\cdot\|_{+1}]$,
$ST=I\restr{\D}$. Hence, $T$ is a continuous invertible linear
operator  with continuous inverse and $\{\xi_n\}$ is a strict
Riesz-like basis for $\D[t]$.
\end{proof}

The condition given in $(iii)$ is clearly the natural substitute for
the inequalities in \eqref{eqn_bounds} in this setting.

Let us call, for short, {\em strict Riesz-like basis} a basis for
which (i) of Theorem \ref{thm: equivalence g R b} holds.

\begin{prop}\label{prop_4.8} If the rigged Hilbert space $\D[t]\subset \H \subset \D^\times[t^\times]$, with $\D[t]$ complete and reflexive,  has a strict
 Riesz-like basis $\{\xi_n\}$  then it is
(equivalent to) a triplet of Hilbert spaces $\H_{+1}
 \subset \H \subset \H_{-1}$.  Moreover, $\{\xi_n\}$ is an orthonormal basis for $\H_{+1}$ and the dual sequence $\{\zeta_n\}$ is an orthonormal basis for $\H_{-1}$.
\end{prop}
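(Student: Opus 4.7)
The plan is to extract $\H_{+1}$ directly from Theorem \ref{thm: equivalence g R b} and to build $\H_{-1}$ by duality, then verify that the resulting triplet $\H_{+1}\subset\H\subset\H_{-1}$ has the same underlying spaces and the same topologies as the given rigged Hilbert space. Concretely, the implication $(i)\Rightarrow(ii)$ of Theorem \ref{thm: equivalence g R b} tells us that $\ip{\xi}{\eta}_{+1}:=\ip{T\xi}{T\eta}$ turns $\D$ into a Hilbert space $\H_{+1}:=\D[\|\cdot\|_{+1}]$ with $\{\xi_n\}$ as an orthonormal basis, and $\|\cdot\|_{+1}$ induces on $\D$ a topology equivalent to $t$. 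Combined with the continuous dense embedding $\D[t]\hookrightarrow\H$, this yields a continuous dense embedding $\H_{+1}\hookrightarrow\H$.

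Next I would show that the extended adjoint $T^\dagger\colon\H\to\D^\times[t^\times]$ is a topological isomorphism. For surjectivity, given $\Phi\in\D^\times$ the map $\eta\mapsto\overline{\ip{\Phi}{T^{-1}\eta}}$ is a continuous linear functional on $\H$ (compose $T^{-1}\colon\H\to\D[t]$ with $\Phi\in\D^\times$), and its Riesz representative $g\in\H$ satisfies $T^\dagger g=\Phi$; injectivity follows from $T(\D)=\H$. For the continuity of $(T^\dagger)^{-1}$, note
\[ \|(T^\dagger)^{-1}\Phi\|_\H=\sup_{\xi\in B}|\ip{\Phi}{\xi}|,\qquad B:=T^{-1}(\{\eta\in\H:\|\eta\|\leq 1\}), \]
and $B$ is bounded in $\D[t]$ since $T^{-1}$ is continuous into $\D[t]$, so the right-hand side is a continuous seminorm for $t^\times=\beta(\D^\times,\D)$. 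Then the pull-back
\[ \ip{\Phi}{\Psi}_{-1}:=\ip{(T^\dagger)^{-1}\Phi}{(T^\dagger)^{-1}\Psi},\qquad\Phi,\Psi\in\D^\times, \]
makes $\D^\times$ into a Hilbert space $\H_{-1}$ under which $(T^\dagger)^{-1}\colon\H_{-1}\to\H$ is a unitary isomorphism, and the associated norm topology is equivalent to $t^\times$. Because $\zeta_n=T^\dagger e_n$, we have $(T^\dagger)^{-1}\zeta_n=e_n$, and hence $\{\zeta_n\}$ is an orthonormal basis of $\H_{-1}$.

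To finish, I would check that $\H\hookrightarrow\H_{-1}$ is continuous and dense. Continuity follows from $\|g\|_{-1}=\sup_{\xi\in B}|\ip{g}{\xi}|\leq(\sup_{\xi\in B}\|\xi\|)\|g\|$, the inner supremum being finite because the continuous embedding $\D[t]\hookrightarrow\H$ sends $B$ to a bounded subset of $\H$; density follows from density of $\D\subset\H$ in $\D^\times[t^\times]$. Compatibility of the $\H_{+1}$--$\H_{-1}$ pairing with $\ip{\cdot}{\cdot}_\H$ is automatic, since the duality form between $\D^\times$ and $\D$ extends the inner product of $\D\subset\H$. The main obstacle I expect is precisely the verification that $T^\dagger$ is a topological isomorphism onto $\D^\times[t^\times]$; once that is in hand, the Hilbert space structures on $\H_{+1}$ and $\H_{-1}$ and the identification of $\{\xi_n\}$, $\{\zeta_n\}$ as orthonormal bases follow essentially by unpacking definitions.
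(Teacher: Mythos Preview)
Your proposal is correct and follows the same route the paper has in mind: the paper does not give a formal proof of Proposition~\ref{prop_4.8} but simply remarks afterwards that ``from the previous discussion, it follows also that $\H_{+1}=\D$ with norm $\|\xi\|_{+1}=\|T\xi\|$,'' i.e.\ it treats the result as an immediate corollary of Theorem~\ref{thm: equivalence g R b} and leaves the dual side implicit. Your argument is exactly the natural completion of that sketch: you invoke (i)$\Rightarrow$(ii) of Theorem~\ref{thm: equivalence g R b} for $\H_{+1}$, and then carry out explicitly the dualisation step (showing $T^\dagger:\H\to\D^\times[t^\times]$ is a topological isomorphism and pulling back the Hilbert structure) that the paper omits. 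One small bonus of your computation $\|(T^\dagger)^{-1}\Phi\|=\sup_{\xi\in B}|\ip{\Phi}{\xi}|$ with $B=T^{-1}(\text{unit ball of }\H)=\{\xi:\|\xi\|_{+1}\leq 1\}$ is that it shows your $\|\cdot\|_{-1}$ coincides with the canonical dual norm of $\H_{+1}$, so $\H_{-1}$ is indeed the conjugate dual of $\H_{+1}$ and the triplet structure is the standard one.
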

In fact, from the previous discussion, it follows also that
$\H_{+1}=\D$ with norm $\|\xi\|_{+1}=\|T\xi\|$, $\xi \in \D$, where
$T\in C(\D,\H)$ is an operator such that $\{T\xi_n\}$ is an
orthonormal basis for $\H$. This operator $T$, regarded as an
operator in $\H$ is, in general, an unbounded operator with domain
$D(T)=\D$ and bounded inverse.

{Strict Riesz-like bases} have an interest in their own since Riesz
bases in triplets of Hilbert spaces are useful for some applications
\cite{jia}. A more detailed analysis will be given in
\cite{bellom_ct_2}.

\beex Let $A$ be a closed operator in a separable Hilbert space
$\H$, with domain $D(A)$. Then $D(A)$ can be made into a Banach
space, denoted by $\B_A$, if a new norm
 is defined by $\|\varphi\|_A:=\|\varphi\| + \|A\varphi \|.$

Let $\B_A^\times$ be the conjugate dual of $\B_A$ w. r. to
$\|\cdot\|_A$. The operator $(I+A^*A)^{1/2}$ is continuous from
$\B_A$ into $\H$ and its continuous extension to $\H$, denoted by
the same symbol, is continuous from $\H$ into $\B_A^\times$ and has
continuous inverse. If $\{e_n\}$ is an orthonormal basis for $\H$,
then the sequence $\{\xi_n\}$ defined by $\xi_n:=
(I+A^*A)^{-1/2}e_n$ is a strict Riesz-like basis for $\B_A$.

A concrete example can be constructed as follows. Consider the
triplet of Sobolev spaces $W^{1,2}({\mb R}) \subset L^2({\mb R})
\subset W^{-1,2}({\mb R})$. As it is well-known, $W^{1,2}({\mb R}) $
is a Banach space under the norm $\|f\|_{1,2}= \|f\|_2 + \|{\sf
D}f\|_2 $, ${\sf D}$ denoting the weak derivative.

Let $\{\xi_n\}$ be the family of functions of $W^{1,2}({\mb R}) $
defined by
$$ \xi_n(x)= \frac{(-i)^n}{\sqrt{2\pi} }\int_{\mb R} \frac{\phi_n(y)e^{ixy}}{(1+y^2)^{1/2}} dy,$$
where $\phi_n(x)=H_n(x)e^{-x^2/2}$ denotes the $n$-th Hermite
function. The family $\{\xi_n\}$ is a strict Riesz-like basis of
$W^{1,2}({\mb R})[\|\cdot\|_{1,2}] $. In fact it is not difficult to
show by standard techniques of Fourier transform that $ (I-{\sf
D}^2)^{1/2} \xi_n= \phi_n$. Moreover the operator $(I-{\sf
D}^2)^{1/2}$ is continuous from $W^{1,2}({\mb R})$ into $L^2({\mb
R})$ and has continuous inverse. The result of (ii) of Theorem
\ref{thm: equivalence g R b} is not surprising at all. Indeed, as it
is well know, the space $W^{1,2}({\mb R})$ can be made into a
Hilbert space with inner product
$$\ip{\varphi}{\psi}_{1,2}'= \ip{(I-{\sf D}^2)^{1/2}\varphi}{(I-{\sf D}^2)^{1/2}\psi}, \quad \varphi, \psi \in W^{1,2}({\mb R})$$ which endows $W^{1,2}({\mb R})$ with a topology equivalent to that defined by $\|f\|_{1,2}$.

\enex

\subsection{An application}\label{sect_appl}
As mentioned in the Introduction, an important problem of
Pseudo-Hermitian Quantum Mechanics is the following: given a
nonself-adjoint Hamiltonian $ \sf{H}$, with real spectrum, one tries
to find a well-behaved (bounded and with bounded inverse)
intertwining operator $T$ which transforms $\sH$ is a self-adjoint
operator $\sH_{sa}$. When this happens one can get of course a large
amount of information on $\sH$ making use of the spectral theory of
self-adjoint operators. The situation becomes more involved in cases
(like the cubic oscillator) where a so regular operator does not
exist and one has to deal with {\em unbounded} intertwining
operators. Even the notion of similarity must be relaxed, with a
certain loss in the preservation of spectra (see, e.g. \cite{
AnTr2014, JPA_CT1, JPA_CT2}). In this section, we will show how the
approach in rigged Hilbert space can be helpful in these cases.

Let $\sH$ be a closed operator in Hilbert space. As already
mentioned {in Example \ref{ex_triplet}}, its domain $D(\sH)$ can be
made into a Hilbert space $\H_{\sH}$ with the graph norm
$\|\cdot\|_{\sH}$. Let $\H_{\sH}^\times$ be its conjugate dual and
consider the triplet of Hilbert spaces $\H_{\sH}\subset \H \subset
\H_{\sH}^\times$. Assume that $\sH_{sa}$ is a self-adjoint operator
in $\H$ with discrete spectrum and, for simplicity, that every
eigenvalue $\lambda_k \in {\mb R}$ has multiplicity $1$. Let
$\psi_k$ be an eigenvector corresponding to $\lambda_k$. Then
$\{\psi_k\}$ is an orthonormal basis for $\H$. Assume that there
exists $T\in C(\H_{\sH}, \H)$, invertible and with continuous
inverse $T^{-1}:\H \to \H_{\sH}$ such that
\begin{equation}\label{sedici} \ip{\sH \xi}{T^\dag \eta}= \ip{T\xi}{\sH_{sa}\eta}, \quad \forall \xi \in \H_{\sH},\, \eta \in D(\sH_{sa}) \mbox{ s. t. } T^\dag \eta \in \H.\end{equation}
Let us define $\xi_k= T^{-1}\psi_k$, $k\in {\mb N}$. Then, the set
$\{\xi_k\}$ is complete and it is a Schauder basis of
$\H_{\sH}[\|\cdot\|_{\sH}]${(Remark \ref{rem_23}(d))}. Hence it is a
strict Riesz-like basis.
From \eqref{sedici}, for every $\eta \in D(\sH_{sa})$, we get
\begin{align*} \ip{\sH \xi_n}{T^\dag \eta}&= \ip{T\xi_n}{\sH_{sa}\eta}=\ip{\psi_n}{\sH_{sa}\eta}=\ip{\sH_{sa}\psi_n}{\eta}\\ &=\lambda_n\ip{\psi_n}{\eta}=\lambda_n\ip{T\xi_n}{\eta}=\lambda_n\ip{\xi_n}{T^\dag \eta}. \end{align*}
Thus, if { $T^\dag D(\sH_{sa})\cap \H$} is dense in $\H$, we get $\sH\xi_n =\lambda_n \xi_n$, for
every $n\in {\mb N}$.

Conversely, assume that a sequence $\{\xi_n\}$ is a strict
Riesz-like basis for $\H_{\sH}$ and that $\sH\xi_n =\lambda_n
\xi_n$, $\lambda_n \in {\mb R}$, for every $n\in {\mb N}$. Since
there exists an operator $T\in C(\H_{\sH}, \H)$, invertible and with
continuous inverse $T^{-1}:\H \to \H_{\sH}$, such that the vectors
$\psi_n=T\xi_n$ constitute an orthonormal basis for $\H$, we can
construct a self-adjoint operator $\sH_{sa}$, in standard way; i.e.,
 \begin{align*} D(\sH_{sa})&=\left\{\xi \in \H: \sum_{k=1}^\infty \lambda_k^2 |\ip{\xi}{\psi_k}|^2<\infty \right\}\\
\sH_{sa}\xi &= \sum_{k=1}^\infty \lambda_k \ip{\xi}{\psi_k}\psi_k,
\quad \xi \in D(\sH_{sa}).
 \end{align*}
{If $\xi \in \H_{\sH}$, then $\xi= \sum_{k=1}^\infty
\ip{\xi}{\zeta_k}\xi_k$ w. r. to $\|\cdot\|_{\sH}$. This, in particular implies that $\sH\xi= \sum_{k=1}^\infty \lambda_k
\ip{\xi}{{\zeta_k}}\xi_k$, in the norm of $\H$. Then, taking into account that $\xi_k\in\H_{\sH}$, for every $k \in {\mb N}$ and that $T^\dag \in C(\H, \H_{\sH}^\times)$,   we have,
for every $\eta \in D(\sH_{sa}) \mbox{ s. t. } T^\dag \eta \in \H$,
\begin{align*}\ip{\sH \xi}{T^\dag \eta}
&=  \ip{\lim_{N\to \infty}\sum_{k=1}^N \lambda_k \ip{\xi}{\zeta_k}\xi_k}{T^\dag\sum_{r=1}^\infty\ip{\eta}{\psi_r}\psi_r}\\
&= \lim_{N\to \infty} \ip{\sum_{k=1}^N \lambda_k \ip{\xi}{\zeta_k}\xi_k}{T^\dag\sum_{r=1}^\infty\ip{\eta}{\psi_r}\psi_r}\\
%&= \lim_{N\to \infty}\lim_{M\to \infty} \ip{\sum_{k=1}^N \lambda_k \ip{\xi}{\zeta_k}\xi_k}{\sum_{r=1}^M\ip{\eta}{\psi_r}T^\dag\psi_r}\\
&= \lim_{N\to \infty} \ip{\sum_{k=1}^N \lambda_k \ip{\xi}{\zeta_k}T\xi_k}{\sum_{r=1}^\infty\ip{\eta}{\psi_r}\psi_r}\\
&= \ip{\sum_{k=1}^\infty \lambda_k \ip{\xi}{\zeta_k}\psi_k}{\sum_{r=1}^\infty\ip{\eta}{\psi_r}\psi_r}\\
&=\sum_{k=1}^\infty \lambda_k
\ip{\xi}{\zeta_k}\overline{\ip{\eta}{\psi_k}}.
\end{align*}
}

On the other hand,
\begin{align*}
\ip{T\xi}{\sH_{sa}\eta}&= \ip{\sum_{k=1}^\infty\ip{\xi}{\zeta_k}\psi_k}{\sum_{r=1}^\infty\lambda_r\ip{\eta}{\psi_r}\psi_r}\\
&=\sum_{k=1}^\infty \lambda_k
\ip{\xi}{\zeta_k}\overline{\ip{\eta}{\psi_k}}.
\end{align*}
Hence the {\em weak} similarity condition \eqref{sedici} is
fulfilled. It is clear that in what we have done a crucial role is
played by the continuity of both $\sH$ and $T$ as linear maps from
$\H_{\sH}$ into $\H$, even though they are in general unbounded
operators when regarded in $\H$. It is worth pointing out that the
assumption $T\in C(\H_{\sH}, \H)$ does not imply that $T$ is a
closable operator in $\H$. But, requiring that $\{ \eta \in
D(\sH_{sa}) \mbox{ s. t. } T^\dag \eta \in \H\}$ is dense in $\H$,
implies that $T$ has a densely defined {\em hilbertian} adjoint
$T^*$ and so it is automatically closable.

\subsection*{Acknowledgement} This work has been supported by the
Gruppo Nazionale per l'Analisi Matematica, la Probabilit\`{a} e le
loro Applicazioni (GNAMPA) of the Istituto Nazionale di Alta
Matematica (INdAM).

\end{document}